\documentclass[11pt]{article}

\usepackage{amsmath,amsthm,verbatim,amssymb,amsfonts,amscd, graphicx}
\usepackage{graphics}
\usepackage[utf8]{inputenc}
\topmargin0.0cm
\headheight0.0cm
\headsep0.0cm
\oddsidemargin0.0cm
\textheight23.0cm
\textwidth16.5cm
\footskip1.0cm
\setlength{\parskip}{0.45\baselineskip}
\linespread{0}
\theoremstyle{plain}

\title{\textrm{The chromatic number of ($P_{5}$, \textit{HVN})-free graphs}\footnote{Supported by NSFC No. 12101117, and by NSFJS No. BK20200344}}

\author{Yian Xu\footnote{Email: yian$\_$xu@seu.edu.cn}\\School of Mathematics, Southeast University\\2 SEU Road, Nanjing, 211189, China}

\date{}

\newtheorem{example}{Example}[section]
\newtheorem{guess}[example]{Theorem}

\newtheorem{coro}[example]{Corollary}
\newtheorem{llemma}[example]{Lemma}
\newtheorem{con}[example]{Conjecture}

\newtheorem{rem}[example]{Remark}

\def\qed{\hfill \rule{4pt}{7pt}}
\def\pf{\noindent {\it Proof. }}
\def\qed{\hfill \rule{4pt}{7pt}}

\begin{document}
\maketitle
\begin{abstract}
Let $G$ be a graph. We use $\chi(G)$ and $\omega(G)$ to denote the chromatic number and clique number of $G$ respectively. A $P_5$ is a path on 5 vertices, and an $HVN$ is a $K_4$ together with one more vertex which is adjacent to   exactly two vertices of $K_4$. Combining with some known result, in this paper we show that if $G$ is $(P_5, \textit{HVN})$-free, then $\chi(G)\leq \max\{\min\{16, \omega(G)+3\}, \omega(G)+1\}$. This upper bound is almost sharp.
\end{abstract}

\textit{Keywords}: $P_5$, HVN, chromatic number, clique number.
\section{Introduction}

Graphs appeared in this paper are all simple and connected.

Let $G$ be a graph with vertex set $V(G)$ and edge set $E(G)$. For any positive integar $k$, we use $P_k$ and $C_k$ to denote a \textit{path} and a \textit{cycle} on $k$ vertices respectively. Let $u\in V(G)$ and $S\subseteq V(G)$. We use $N_{G[S]}(u)$ to denote the set of neighbours of $u$ in $S$. If $u$ is adjacent to   all (no) vertices of $S$, then we say that {\it $u$ is complete (anticomplete) to $S$}. A subset $T\subseteq V(G)$ is said to be {\it complete (anticomplete) to $S$} if each vertex in $T$ is complete (anticomplete) to $S$. We say that {\it $T$ is adjacent to   $S$} if there exists $v\in T$ such that $N_{G[S]}(v)\neq\emptyset$. A subset $H$ of vertices is {\it homogeneous} if every vertex in $V(G)\backslash H$ is either complete or anticomplete to $H$.

A subset $S\subseteq V(G)$ is said to be a {\it clique} if the subgraph $G[S]$ induced by $S$ is a complete graph, and is said to be {\it stable} if $G[S]$ has no edges. The size of the largest clique of $G$ is said to be the {\it clique number} of $G$, denoted by $\omega(G)$. Let $k$ be a positive integer. A {\it$k$-coloring} of $G$ is a function $\ell: V(G)\mapsto \{1, \ldots, k\}$ such that $\ell(u)\neq \ell(v)$ if $u$ and $v$ are adjacent in $G$. The {\it chromatic number} of $G$, denoted by $\chi(G)$, is the minimum number $k$ such that $G$ admits a $k$-coloring.

A graph $G$ is said to be a {\it perfect graph} if $\chi(H)=\omega(H)$ for any induced subgraph $H$ of $G$, and so if $G$ is perfect, then $\chi(G)=\omega(G)$. It is easy to observe that in general, the chromatic number of a graph $G$ is no less than the clique number. However, it is much more complicated to determine a non-trivial upper bound for $\chi(G)$ with respect to $\omega(G)$. A family of graphs $\mathcal{G}$ is said to be {\it$\chi$-bounded} \cite{MR0382051} if there exists some function $f$ such that $\chi(G)\leq f(\omega(G))$ for every $G\in\mathcal{G}$, and $f$ is said to be the {\it binding function} of $G$. Erd\H{o}s \cite{MR102081} showed that for any positive integers $k$ and $\ell$, there exists a graph $G$ with $\chi(G)\geq k$ and no cycles of length less than $\ell$. This motivated the study of the chromatic number of $\mathcal{H}$-free graphs for some $\mathcal{H}$. A graph is {\it $H$-free} if $G$ does not have $H$ as an induced subgraph, and is $\mathcal{H}$-free if it is $H$-free for each $H\in \mathcal{H}$. Gy\'{a}rf\'{a}s \cite{MR0382051, MR951359}, and Sumner \cite{MR634555} proposed the following conjecture respectively.

\begin{con}\label{Gyarfas87}
For every tree $T$, $T$-free graphs are $\chi$-bounded.
\end{con}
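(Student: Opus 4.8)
The plan is to prove the contrapositive by induction on the number of vertices of the fixed tree $T$: I want to show that every graph $G$ whose chromatic number exceeds some function $f(\omega(G))$ (here I am free to choose $f$ as large as I like, since the conjecture only asks for \emph{some} binding function) must contain $T$ as an induced subgraph. Two standard reductions would drive the argument. First, a \emph{degeneracy} reduction: if $\chi(G)>k$ then $G$ has a subgraph $H$ with minimum degree at least $k$, obtained by repeatedly deleting vertices of degree less than $k$, since such deletions cannot drop the chromatic number below $k$. Second, a \emph{leveling} reduction: fix a vertex $r$ and partition $V(G)$ into breadth-first levels $L_0=\{r\}, L_1, L_2, \dots$, so that every edge lies inside a single level or between two consecutive levels. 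The even-indexed levels are pairwise anticomplete, and so are the odd-indexed ones, whence $\chi(G)\le 2\max_i \chi(G[L_i])$; thus some single level induces a subgraph of chromatic number at least $\chi(G)/2$. Iterating the leveling produces a rooted layered structure whose depth I can take to be the radius of $T$ while keeping a large chromatic number in the deepest layer.

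With these tools in hand I would try to embed $T$ greedily, one layer at a time, matching a rooted BFS decomposition of $T$ to the leveling of $G$. Root $T$ at a central vertex and process its vertices in order of distance from the root. Having embedded the part of $T$ up to depth $i$, I extend each already-placed vertex to its children inside the next level, using the guaranteed high minimum degree (or high chromatic number) there, and choosing the children so as to respect the non-adjacencies that $T$ prescribes. When $T$ is a path this is precisely Gy\'{a}rf\'{a}s's argument, and when $T$ has radius $2$ it is the Kierstead and Penrice argument, in which a Ramsey-type pruning carried out inside a single neighbourhood keeps the bipartite adjacency pattern under control while preserving enough chromatic number to continue.

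The hard part, and the reason the statement remains a conjecture in full generality, is exactly this embedding step once $T$ has radius at least $3$. Descending more than two levels forces me to control, simultaneously, adjacencies \emph{and} non-adjacencies between vertices placed in distant branches of $T$, whereas the leveling only governs consecutive levels and says nothing about chords between far-apart layers; the greedy extension can then reuse vertices or create forbidden edges. I therefore expect the decisive obstacle to be the proof of a \emph{neighbourhood-refinement} lemma: given a layer of large chromatic number, extract a sublayer whose bipartite adjacency to the previous layer is at once rich enough to keep extending the embedding and structured enough — for instance with the relevant neighbourhoods \emph{homogeneous} in the sense defined above — to rule out the unwanted chords. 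No such refinement is presently known for arbitrary depth, which is why the known proofs settle only restricted families (paths, radius-$2$ trees, certain spiders and brooms) rather than every tree, and closing this gap is the essential content of the conjecture.
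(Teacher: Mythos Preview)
The statement you were asked to prove is not a theorem in the paper but an \emph{open conjecture} --- the Gy\'arf\'as--Sumner conjecture, stated there as Conjecture~\ref{Gyarfas87} purely for context and motivation. The paper makes no attempt to prove it and contains no proof to compare your proposal against; its actual contribution is the special case where $T=P_5$ together with the additional forbidden subgraph \textit{HVN}, establishing the linear bound $\chi(G)\le\omega(G)+3$ for that restricted class.

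Your write-up is not a proof but a survey of the standard partial approaches (Gy\'arf\'as's path argument, Kierstead--Penrice for radius two) together with an accurate diagnosis of why the general inductive embedding scheme stalls at radius three: the BFS layering controls only consecutive levels, so long chords between distant branches cannot be excluded, and no adequate neighbourhood-refinement lemma is known. That assessment is correct and is exactly why the statement remains a conjecture; but it means you have not produced a proof, only an explanation of the obstruction. If the intent was to prove the stated result, there is a genuine gap --- namely the entire argument for trees of radius at least three --- and it is not one that can be closed with presently known techniques.
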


Gy\'{a}rf\'{a}s \cite{MR951359} proved that if $G$ is $P_k$-free and $\omega(G)\ge 2$, then $\chi(G) \leq {(k-1)}^{\omega(G) - 1}$ for $k\ge 4$. Gravier {\it et al.} \cite{MR2009549} further improved the upper bound to $(k-2)^{\omega(G)-1}$. Since $P_4$-free graphs are perfect, determining an optimal binding function of $P_5$-free graphs attracts plenty of attention. Up to now, the best known upper bound for $P_5$-free graphs is given by Esperet {\it et al.} \cite{MR3010736}, who showed that if $G$ is $P_5$-free and $\omega(G)\ge 3$, then $\chi(G)\le 5\cdot3^{\omega(G)-3}$, and the bound is sharp for $\omega(G)=3$.  Choudum {\it et al.} \cite{MR2292667} conjectured that there exists a constant $c$ such that for every $P_5$-free graph $G$, $\chi(G)\le c\omega^2(G)$.

\begin{figure}[htbp]
\begin{center}
\includegraphics[scale=0.8]{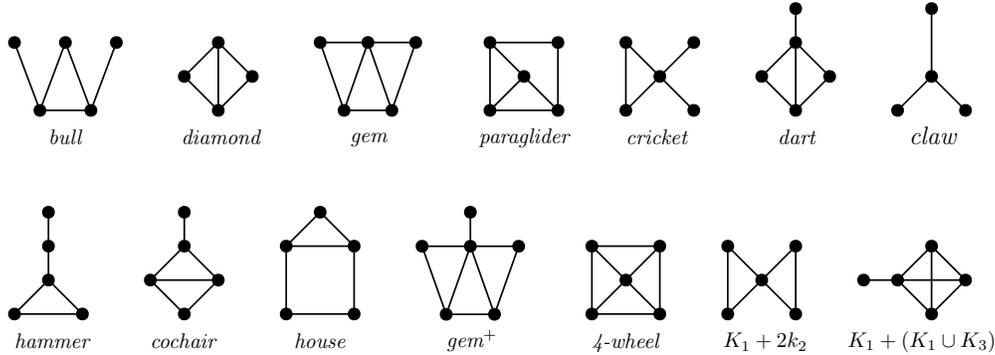}
\end{center}
\vskip -3pt
 {\small \caption{Illustration of some forbidden configurations}}
\label{fig-1}
\end{figure}

The study of $(P_5, H)$-free graphs has attracted plenty of attention, especially, when $|V(H)|\leq 5$. It is shown in \cite{MR634555} that all  $(P_5, K_3)$-free graphs are 3-colorable, and there exist many $(P_5, K_3)$-free graphs with chromatic number 3. Esperet {\it et al.} \cite{MR3010736} showed that every $(P_5, K_4)$-free graph $G$ satisfies $\chi(G)\leq 5$. If $G$ is a $(P_5, \textit{H})$-free graph where $H \in \{$hammer \cite{MR3906725}, bull \cite{MR3879962}, cochair and cricket \cite{DX}, house \cite{MR1360104}$\}$, then $G$ admits a ${\omega(G)+1\choose 2}$-coloring. Schiermeyer \cite{MR3488929} proved that $\chi(G)\le \omega^2(G)$ for ($P_5$, $H$)-free graphs $G$, where $H$ is a graph in $\{$claw, dart, diamond$\}$. If $G$ is $(P_5, \textit{gem})$-free, then it is shown in \cite{MR4174128} that $G$ is $\lceil\frac{5\omega(G)}{4}\rceil$-colorable. Dong {\it et al.} \cite{MR4434829} showed that $\chi(G)\le 2\omega^2(G)-\omega(G)-3$ if  $G$ is  $(P_5,  K_{2,3})$-free with $\omega(G)\ge 2$, $\chi(G)\le {\omega(G)+1\choose 2}$ if $G$ is $(P_5, C_5, K_{2, 3})$-free, and $\chi(G)\le {3\over 2}(\omega^2(G)-\omega(G))$ if  $G$ is   $(P_5, K_1+2K_2)$-free. In \cite{DXYX}, Dong {\it et al.} gave a characterization for $(P_5, K_1\cup K_3)$-free graphs, and  proved that $\chi(G)\le 2\omega(G)-1$ if $G$ is $(P_5, K_1\cup K_3)$-free. In the same paper, they further showed that  if $G$ is $(P_5, K_1+(K_1\cup K_3))$-free, then $\chi(G)\le $max$\{2\omega(G),15\}$, and they construct some $(P_5, K_1+( K_1\cup K_3))$-free graph with $\chi(G)=2\omega(G)$. Huang and Karthick \cite{MR4267041} showed that if $G$ is  $(P_5$, paraglider)-free, then $\chi(G)\le \lceil{3\omega(G)\over 2}\rceil$. Char and Karthick \cite{MR4366289} proved that a $(P_5, \textit{4-wheel})$-free graph $G$ is $\frac{3}{2}\omega(G)$-colorable. There are also interesting results for $H$ whose order is larger than $5$. Schiermeyer \cite{MR3488929} proved that $\chi(G)\le \omega^2(G)$  when $H$ is a gem${^+}$. A $(P_5, K_{2,t})$-free graph with $t\geq 2$ is $(\ell_t\cdot \omega(G)^t)$-colorable for some constant $\ell_t$ \cite{MR3898374}.

Recently, Trotignon and Pham \cite{MR3789676} posed a question asking for a polynomial binding function for $P_5$-free graphs. Notice that the existence of such a function would imply the Erd\H{o}s-Hajnal conjecture \cite{MR599767, MR1031262} for $P_5$-free graphs. This problem is still open even for $(P_5, H)$-free graphs for $H$ with small order. Chudnovsky and Sivaraman \cite{MR3879962} showed that $\chi(G)\le 2^{\omega(G)-1}$ if $G$ is ($P_5, C_5)$-free.  For $(P_5, P^c_5)$-free graphs, Fouquet {\it et al.} \cite{MR1360104} showed that there exists no linear binding function. We refer the readers to \cite{MR3898374, Scott2020} for more information about the topics related to $\chi$-boundedness.

\begin{figure}[htbp]
\begin{center}
\includegraphics[scale=0.8]{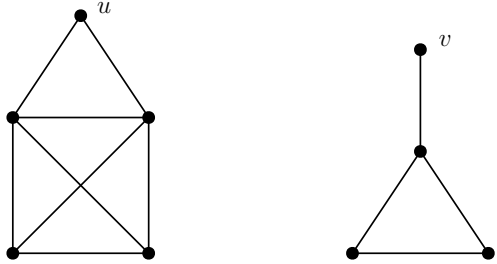}
\end{center}
\caption{(a): An HVN. (b): A paw.}
\label{fullpaw}
\end{figure}

An {\it HVN} is a graph consisting of a $K_4$ and an additional vertex $u$ where $u$ is adjacent to  exactly two vertices on $K_4$ (Figure~\ref{fullpaw}(a)). Karthick {\it et al.} \cite{MR3854117} proved that any $(2K_2, \textit{HVN})$-free graph $G$ satisfies $\chi(G)\leq \omega(G)+3$. Malyshev (see \cite[Lemma 16]{MR3449432}) proved that $\chi(G)\leq \max\{16, \omega(G)+1\}$ for any $(P_5, \textit{HVN})$-free graphs. Our main result generalizes the  result of \cite{MR3854117} to a larger family of graphs, and improves the result of \cite{MR3449432} when the graph in consideration  has clique number at most 13.

\begin{guess}\label{main}
Let $G$ be a $(P_5, \textit{HVN})$-free graph. Then $\chi(G)\leq \omega(G)+3$.
\end{guess}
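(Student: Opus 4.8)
The overall strategy is to reduce to the case $\omega(G)$ small (so that the known bound $\chi(G)\le\max\{16,\omega(G)+1\}$ of Malyshev settles everything once $\omega(G)\ge 13$, and $\omega(G)+3\ge 16$), and then to attack the remaining bounded-clique range via a detailed structural analysis of $(P_5,\textit{HVN})$-free graphs. Being $\textit{HVN}$-free is a very strong local condition: it says that no edge $uv$ can simultaneously lie in two triangles $uvx$, $uvy$ with $xy\in E(G)$ while also... more precisely, it forbids a $K_4$ with a pendant vertex seeing two of its vertices. A convenient reformulation is that for every edge $uv$, the common neighbourhood $N(u)\cap N(v)$ cannot contain both an edge and an extra vertex adjacent to... so in particular, once an edge $uv$ lies in a $K_4$, its common neighbourhood is tightly constrained. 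I would first isolate this as a lemma: \emph{if $uv$ lies in a triangle $uvw$ with a fourth vertex $z$ adjacent to all of $u,v,w$, then $N(u)\cap N(v)$ induces a graph with no further structure} — this is the engine that forces large cliques to be ``nearly homogeneous''.

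Second, I would exploit the $P_5$-free structure theory. A standard fact is that a connected $P_5$-free graph either has a dominating clique or a dominating induced $C_5$ (Bacsó–Tuza). The plan is to split on which case occurs. In the $C_5$-dominating case the graph is ``small'' in a controlled way and combined with $\textit{HVN}$-freeness one shows $\omega$ is bounded and the chromatic number is small outright (this is essentially where the constant $16$ — or rather the slack up to $\omega+3$ — comes from). In the dominating-clique case, let $K$ be a maximum dominating clique; partition $V(G)\setminus K$ according to the neighbourhood trace on $K$. The key structural claim to establish is that the ``non-neighbours-of-most-of-$K$'' part together with $\textit{HVN}$-freeness forces a homogeneous set or a clean partition into few pieces, each of which is perfect or near-perfect, so that $\chi$ exceeds $\omega$ by at most an additive constant, and a careful accounting pins that constant at $3$.

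Third — and this is where I expect the real work — I would handle homogeneous sets by induction: if $G$ has a nontrivial homogeneous set $H$, contract it and argue that the bound for $G/H$ plus the bound for $G[H]$ combine to give $\chi(G)\le\omega(G)+3$ (one must check the clique numbers add up correctly, which is where $\textit{HVN}$-freeness is used to control $\omega(G[H])$ versus $\omega(G)$). So one reduces to $G$ having no nontrivial homogeneous set, and in that prime case the dominating clique $K$ together with the neighbourhood partition of $V(G)\setminus K$ must be genuinely rich, and I would argue that $\textit{HVN}$-freeness then bounds $\omega(G)$ by an absolute constant, after which one finishes by a bare-hands colouring (greedy on a good vertex ordering, or colouring $K$ and extending) losing at most $3$ colours.

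The main obstacle, I expect, is the prime dominating-clique case: controlling how vertices outside $K$ with small neighbourhood-trace on $K$ interact with each other and with $K$ without creating a $P_5$ or an $\textit{HVN}$, and squeezing the additive loss down from ``some constant'' to exactly $3$. That last optimization — getting $+3$ rather than, say, $+4$ or $+5$ — is likely the most delicate part and probably requires a somewhat intricate case split on the structure of the ``private neighbourhoods'' of $K$, together with the observation (from $(2K_2,\textit{HVN})$-free graphs having $\chi\le\omega+3$, due to Karthick et al.) that the relevant local pieces are $2K_2$-free and hence covered by that sharp bound.
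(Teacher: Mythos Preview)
Your plan has a concrete arithmetical gap at the homogeneous-set reduction. Granted, if $H$ is a proper homogeneous set in a connected $(P_5,\textit{HVN})$-free graph then $G[H]$ is paw-free (any vertex complete to $H$ turns a paw in $H$ into an \textit{HVN}), hence by Olariu's theorem complete multipartite or triangle-free; this is exactly the paper's Lemma~2.7. But substitution only gives $\chi(G)\le\chi(G/H)-1+\chi(G[H])$, and when $G[H]$ is triangle-free with $\chi(G[H])=3$ and $\omega(G[H])=2$ you obtain $\chi(G)\le(\omega(G/H)+3)+2$. Since always $\omega(G)-\omega(G/H)\le\omega(G[H])-1=1$, this yields only $\chi(G)\le\omega(G)+4$, not $\omega(G)+3$. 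The phrase ``one must check the clique numbers add up correctly'' is precisely where they do not; the additive constant $+3$ does not survive substitution. The remainder of the plan --- the dominating-clique partition, the assertion that in the prime case $\omega$ is absolutely bounded, and the hope that the relevant local pieces are $2K_2$-free --- is programmatic and contains no verifiable step; nothing you have said forces those pieces to be $2K_2$-free, and no argument is offered for the bounded-$\omega$ claim.

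The paper proceeds entirely differently. It does not invoke Malyshev, Bacs\'o--Tuza, or substitution on homogeneous sets. Instead it fixes an induced $C_5$ and partitions $V(G)$ by neighbourhood type on it (Lemma~2.5), then splits into three cases according to whether $G$ contains an induced $T$-5-wheel (Section~3), a $Y$-5-wheel but no $T$-5-wheel (Section~4), or neither (Section~5); the $C_5$-free case is dispatched by the odd-hole-free bound of Theorem~5.1. In each wheel case a specific set $A$ of vertices --- lying in the neighbourhood of a single vertex (the hub $x$, or $v_2$, or $v_5$) --- is paw-free, hence $(\omega(G[A])+1)$-colourable by Corollary~2.4, which is at most $\omega(G)$ colours. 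The remaining vertices outside the anticomplete residue $S$ are then covered by three explicit stable sets, and a targeted argument (using Corollary~2.4 to force two prescribed stable subsets of $A$ onto colours $c_1,c_2$) shows that $S$ can reuse colours $c_3,\dots,c_{\omega+2}$. The constant $+3$ is thus the number of leftover stable sets in an explicit cover, not the output of an inductive bookkeeping.
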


Notice that $(P_5, K_4)$-free graphs are $5$-colorable and there are many $(P_5, K_4)$-free graphs with chromatic number 5 \cite{MR3010736}. Thus the bound in Theorem~\ref{main} is almost sharp for graphs with small clique number. Since $(P_5, \textit{HVN})$-free graphs can have induced cycle with five vertices, such graphs may not be perfect, and so the upper bound  $\omega(G)+1$ of \cite[Lemma 16]{MR3449432} is certainly tight for graphs with large clique number. Combining the conclusion of \cite[Lemma 16]{MR3449432}, we have that $\chi(G)\leq \max\{\min\{16, \omega(G)+3\}, \omega(G)+1\}$ for all $(P_5, \textit{HVN})$-free graphs.

Before presenting the sketch of the proof of Theorem~\ref{main}, we introduce two more structures. A {\it $T$-5-wheel} is a graph consisting of a chordless 5-cycle $C=v_{1}v_{2}v_{3}v_{4}v_{5}v_1$ and an additional vertex $x$  where $x$ is adjacent to   three consecutive vertices on $C$, say $v_1, v_2, v_5$ (Figure~\ref{5wheel}(a)). A {\it $Y$-5-wheel} is a graph consisting of a chordless 5-cycle $C=v_{1}v_{2}v_{3}v_{4}v_{5}v_1$ and an additional vertex $x$  where $x$ is adjacent to   three non-consecutive vertices on $C$, say $v_1, v_2, v_4$ (Figure~\ref{5wheel}(b)).

The proof of Theorem~\ref{main} is organized as following.  Section 2 contains the preliminaries which are crucial for the proof of Theorem~\ref{main}. In Section 3, we will first prove that $(P_5, \textit{HVN})$-free graphs with induced $T$-5-wheels are $(\omega+3)$-colorable (Theorem~\ref{t5}). Then in Section 4 we show that $(P_5, \textit{HVN}, T\textit{-5-wheel})$-free graphs with induced $Y$-5-wheels are $(\omega+3)$-colorable (Theorem~\ref{y5}). Finally, we finish the proof of Theorem~\ref{main} in the last section by showing that $(P_5, \textit{HVN}, T\textit{-5-wheel}, Y\textit{-5-wheel})$-free graphs are $(\omega+3)$-colorable.

\begin{figure}[htbp]
\begin{center}
\includegraphics[scale=0.8]{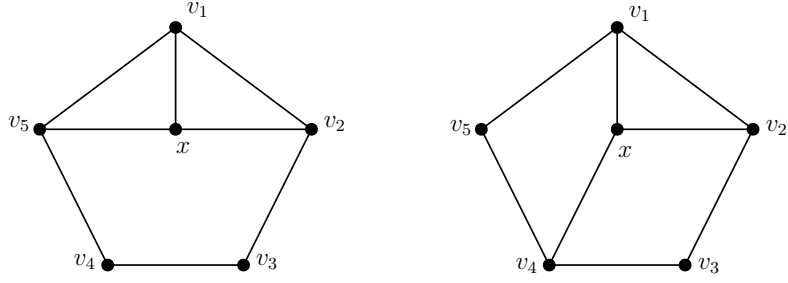}
\end{center}
\caption{(a): A $T$-5-wheel. (b): A $Y$-5-wheel.}
\label{5wheel}
\end{figure}

\section{Preliminaries}

The aim of this section is to present some results which will be cited repeatedly in Section 3, Section 4 and Section 5. A {\it paw} is a graph consisting of a $K_3$ and an additional vertex $v$ where $v$ has exactly one neighbour on $K_3$ (Figure~\ref{fullpaw}(b)). A {\it 5-ring} is a graph whose vertex set can be partitioned into 5 non-empty stable sets $R_1, \ldots, R_5$ such that $R_i$ is complete to $R_{i-1}\cup R_{i+1}$ and anticomplete to $R_{i-2}\cup R_{i+2}$ (with subscripts modulo 5) for each $1\leq i\leq 5$.

\begin{guess}\label{pawfree}\cite{MR947254}
A graph $G$ is paw-free if and only if $G$ is either complete multipartite or triangle-free.
\end{guess}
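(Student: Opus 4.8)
The plan is to prove both implications directly; the statement is classical (Olariu~\cite{MR947254}), so I only sketch the argument. Since all graphs considered are connected, ``complete multipartite'' means connected complete multipartite, and I will use the standard fact that a graph is complete multipartite if and only if it has no induced $K_1\cup K_2$; equivalently, non-adjacency together with equality is an equivalence relation on its vertex set. I will also use repeatedly the elementary observation that in a paw-free graph every vertex outside a triangle $T$ has exactly $0$, $2$, or $3$ neighbours in $T$, since having exactly one neighbour in $T$ creates a paw. For the ``if'' direction: a triangle-free graph contains no paw because a paw contains a triangle; and if $G$ is complete multipartite and $\{a,b,c,v\}$ induces a paw with triangle $abc$ whose unique vertex adjacent to $v$ is $a$, then $v$ is non-adjacent to both $b$ and $c$, so $v,b,c$ lie in one part of $G$, whence $bc\notin E(G)$ --- a contradiction.

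For the ``only if'' direction, suppose $G$ is paw-free; if $G$ is triangle-free we are done, so assume $G$ contains a triangle. Among all induced subgraphs of $G$ that are complete multipartite and contain a triangle, choose $M$ with the largest number of vertices (a triangle, being $K_{1,1,1}$, is such a subgraph, so $M$ exists), and let $P_1,\dots,P_r$ be its parts, so that $r\geq 3$. It suffices to prove $M=V(G)$. If not, connectedness provides a vertex $v\in V(G)\setminus V(M)$ having a neighbour in $M$, and it is then enough to show that $M\cup\{v\}$ is again complete multipartite, which contradicts the maximality of $M$.

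So the heart of the proof is to show that such a vertex $v$ is either complete to $V(M)$, or anticomplete to exactly one part $P_i$ and complete to $V(M)\setminus P_i$; in the first case $v$ becomes a new part and in the second $v$ joins $P_i$, so that in either case $M\cup\{v\}$ is complete multipartite and still contains a triangle. To establish this I apply the observation above to triangles of $M$ consisting of one vertex from each of three distinct parts. First, $v$ cannot have both a neighbour $m$ and a non-neighbour $n$ lying in the same part $P_i$: choosing $x_j\in P_j$ and $x_k\in P_k$ with $j,k$ distinct and different from $i$, the triangles $\{m,x_j,x_k\}$ and $\{n,x_j,x_k\}$ together force $v$ adjacent to $x_j$, and then $\{v,m,x_j\}$ is a triangle in which $n$ has exactly one neighbour --- a paw. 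Hence $v$ is complete or anticomplete to each part, and, having a neighbour in $M$, it is complete to some part $P_t$. Second, $v$ cannot be anticomplete to two distinct parts $P_i,P_j$: for $n_i\in P_i$, $n_j\in P_j$ and $x_t\in P_t$, the triangle $\{n_i,n_j,x_t\}$ has $v$ adjacent to exactly one of its vertices --- again a paw. Combining the two points gives exactly the claimed description of the neighbourhood of $v$ in $M$, and the degenerate cases ($r=3$, singleton parts) cause no difficulty. I expect this last step --- arranging the triangle choices so that paw-freeness completely determines the neighbourhood of $v$ in $M$ --- to be the only real obstacle, though a short one.
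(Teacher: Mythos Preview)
The paper does not prove this statement: Theorem~\ref{pawfree} is quoted from Olariu~\cite{MR947254} and used as a black box, so there is no proof in the paper to compare your attempt against. Your argument is correct. The ``if'' direction is immediate as you indicate, and in the ``only if'' direction your maximal-complete-multipartite-subgraph argument is sound: the two triangle checks (first forcing $v$ to be complete or anticomplete to each part, then ruling out anticompleteness to two parts) go through exactly as you describe, and the requirement $r\ge 3$ guarantees all needed auxiliary vertices exist. One small remark: in your first claim the two triangles together actually force $v$ adjacent to \emph{both} $x_j$ and $x_k$, not just $x_j$, but you only need one of them for the paw, so this is harmless.
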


\begin{guess}\label{trianglefree}\cite{MR634555}
A graph $G$ is $(P_{5}, K_3)$-free if and only if $G$ is either bipartite or a 5-ring.
\end{guess}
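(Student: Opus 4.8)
The plan is to prove the two substantive halves of the classification: that a 5-ring is indeed $(P_5,K_3)$-free, and that every connected $(P_5,K_3)$-free graph is either bipartite or a 5-ring. The reverse half is short. Triangle-freeness of a 5-ring is immediate, since a triangle would require three distinct classes that are pairwise consecutive modulo $5$ (two vertices in one class are non-adjacent), and $C_5$ has no triangle. For $P_5$-freeness I would take a putative induced $a_1a_2a_3a_4a_5$ with $a_j\in R_{c_j}$; an edge $a_ja_{j+1}$ forces $c_{j+1}\equiv c_j\pm1$, so each step $\varepsilon_j=c_{j+1}-c_j$ lies in $\{\pm1\}$, while every non-edge $a_ja_{j+2}$ holds automatically because $\varepsilon_j+\varepsilon_{j+1}\in\{0,\pm2\}$. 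The non-adjacency of $a_1a_4$ forces $\varepsilon_1=\varepsilon_2=\varepsilon_3$ and that of $a_2a_5$ forces $\varepsilon_2=\varepsilon_3=\varepsilon_4$, so all four steps agree in sign; then $c_5-c_1\equiv\pm4\equiv\mp1$, making $a_1a_5$ an edge and contradicting that these are the ends of the path.

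For the forward direction, assume $G$ is connected and $(P_5,K_3)$-free. If $G$ is bipartite there is nothing to prove, so suppose $G$ is not bipartite and let $C$ be a shortest odd cycle. A shortest odd cycle is chordless (a chord would split off a shorter odd cycle), hence induced; triangle-freeness rules out $|C|=3$, and if $|C|\ge 6$ then five consecutive vertices of $C$ induce a $P_5$. Thus $|C|=5$, and I write $C=v_1v_2v_3v_4v_5v_1$. The goal is to show that $G$ is exactly the 5-ring built on $C$.

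The heart of the argument is to control how every other vertex attaches to $C$. Since $G$ is triangle-free, no vertex is adjacent to two consecutive $v_i$, so $N(u)\cap C$ is a stable set of $C$: empty, a singleton, or a non-adjacent pair $\{v_{i-1},v_{i+1}\}$. A singleton $N(u)\cap C=\{v_i\}$ is impossible, since $u\,v_i\,v_{i+1}\,v_{i+2}\,v_{i+3}$ would be an induced $P_5$. A vertex at distance $2$ from $C$ is impossible as well: if $u\sim w$ with $u$ anticomplete to $C$ and $N(w)\cap C=\{v_{i-1},v_{i+1}\}$, then $u\,w\,v_{i+1}\,v_{i+2}\,v_{i+3}$ is an induced $P_5$ (here $v_{i+2},v_{i+3}$ are non-neighbours of $w$). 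By connectivity this forces every vertex to lie on $C$ or be adjacent to $C$. Consequently each vertex $u$ satisfies $N(u)\cap C=\{v_{i-1},v_{i+1}\}$ for a unique $i$ — namely the index of the unique common neighbour on $C$ of that non-adjacent pair — and I set $R_i=\{u:\,N(u)\cap C=\{v_{i-1},v_{i+1}\}\}$, which partitions $V(G)$ with $v_i\in R_i$.

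It remains to verify the 5-ring axioms. Each $R_i$ is stable and anticomplete to $R_{i\pm2}$, because an edge inside $R_i$, or an edge between $R_i$ and $R_{i+2}$, together with their common neighbour ($v_{i-1}$, resp.\ $v_{i+1}$) would create a triangle. The one place that genuinely uses $P_5$-freeness is showing $R_i$ is complete to $R_{i+1}$: given $x\in R_i$ and $y\in R_{i+1}$ with $x\not\sim y$, the vertices $v_{i-1}\,x\,v_{i+1}\,v_{i+2}\,y$ form an induced $P_5$, a contradiction. This pins down all adjacencies of a 5-ring and finishes the forward direction. I expect this completeness step — producing exactly the right induced $P_5$, together with the distance-$2$ and singleton exclusions that force every vertex onto or next to $C$ — to be the main obstacle, since each of these requires keeping the cyclic indices of $C$ (taken modulo $5$) straight and checking that, for the five vertices chosen, every non-consecutive pair is a genuine non-edge.
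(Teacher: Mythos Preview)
The paper does not prove this statement; it is quoted from \cite{MR634555} as a known result, so there is no ``paper's own proof'' to compare against. Your argument is nonetheless correct and self-contained.

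A couple of minor remarks. In the forward direction you write ``if $|C|\ge 6$''; since $C$ is a shortest odd cycle this should read $|C|\ge 7$, but the conclusion (five consecutive vertices of an induced cycle of length at least six form an induced $P_5$) is unaffected. In the completeness step $R_i\to R_{i+1}$, the chosen path $v_{i-1}\,x\,v_{i+1}\,v_{i+2}\,y$ implicitly uses $x\neq v_i$ and $y\neq v_{i+1}$; this is fine because $v_i\in R_i$ is automatically adjacent to every $y\in R_{i+1}$ (and symmetrically for $v_{i+1}$), so the non-adjacency hypothesis already forces $x,y\notin V(C)$ and the five vertices are distinct. With these small clarifications your proof goes through cleanly.
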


\begin{llemma}\label{tfree}\cite[Corollary 2.2]{MR3010736}
Let $G$ be a $(P_5, K_3)$ graph. Let $S$ and $T$ ($T$ might be empty) be two disjoint stable sets where each vertex of $T$ has a neighbour in $S$. Then $G$ admits a $3$-coloring where $S$ receives $1$ and $T$ receives $2$.
\end{llemma}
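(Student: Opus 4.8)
\noindent\emph{Proof plan.}\ The plan is to reduce everything to the structure theorem for triangle-free $P_5$-free graphs. Since the graphs in this paper are connected, Theorem~\ref{trianglefree} tells us that $G$ is either bipartite or a $5$-ring, and I would treat these two cases in turn. In both, the goal is a proper $3$-colouring of $G$ with colour classes $C_1,C_2,C_3$ such that $S\subseteq C_1$ and $T\subseteq C_2$; that $S$ and $T$ are stable is precisely what makes such a colouring conceivable, and the hypothesis that every vertex of $T$ has a neighbour in $S$ is what will let me pin down where $T$ can sit.

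Suppose first $G$ is bipartite with parts $X$ and $Y$, and set $S_X=S\cap X$, $T_X=T\cap X$, and likewise $S_Y,T_Y$. If $T\subseteq Y$, I would colour $S$ with $1$, $X\setminus S_X$ with $3$, and $Y\setminus S_Y$ with $2$: the three colour classes are each stable, so the colouring is proper, $S$ gets $1$, and $T\subseteq Y\setminus S_Y$ (as $T\cap S=\emptyset$) gets $2$. If $T\subseteq X$, the same works after swapping the colours $2$ and $3$. So it remains to rule out $T_X\ne\emptyset$ and $T_Y\ne\emptyset$ simultaneously. In that case I would pick $u_1\in T_X$ and a neighbour $s_2$ of $u_1$ in $S$; as $u_1\in X$ this forces $s_2\in S_Y$. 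Likewise pick $u_2\in T_Y$ and a neighbour $s_1\in S_X$ of $u_2$. Among $u_1,s_2,u_2,s_1$ the only two edges are $u_1s_2$ and $u_2s_1$: the pairs $u_1s_1$ and $s_2u_2$ are non-edges because they are monochromatic for the bipartition, and $u_1u_2$, $s_1s_2$ are non-edges because $T$ and $S$ are stable. Now take a shortest $u_1$--$u_2$ path $P$ in $G$. It has odd length (opposite sides) and length at least $3$ (since $u_1\not\sim u_2$), and since shortest paths are induced it cannot have five or more vertices, for otherwise its first five vertices would be an induced $P_5$. Hence $P=u_1q_1q_2u_2$ with $q_1\in Y$, $q_2\in X$. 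In $s_2u_1q_1q_2u_2$ every non-consecutive pair except possibly $s_2q_2$ is forced to be a non-edge, so $P_5$-freeness forces $s_2\sim q_2$; but then in $u_1s_2q_2u_2s_1$ every non-consecutive pair is a non-edge, so this is an induced $P_5$ --- a contradiction. (One also has to check that the five vertices displayed in each such path are distinct, but every possible coincidence is excluded by the stability of $S$ or of $T$.) That settles the bipartite case.

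If instead $G$ is a $5$-ring with parts $R_1,\dots,R_5$, observe that the indices of the parts met by a stable set form an independent set of the $5$-cycle on indices, so $S$ is contained in some $R_i$ or in some $R_i\cup R_{i+2}$; after relabelling I may assume $S\subseteq R_1\cup R_3$. If $S=\emptyset$ then $T=\emptyset$ and $R_1,R_3\mapsto 1$, $R_2,R_4\mapsto 2$, $R_5\mapsto 3$ is a valid colouring. Otherwise $R_2\subseteq N(S)$ always, $R_5\subseteq N(S)$ iff $S$ meets $R_1$, $R_4\subseteq N(S)$ iff $S$ meets $R_3$, while $R_1,R_3$ are disjoint from $N(S)$; combined with $T\subseteq N(S)$ and the stability of $T$ (which prevents $T$ from meeting both $R_4$ and $R_5$, since $R_4$ is complete to $R_5$), this confines $T$ to $R_2\cup R_4$, or to $R_2\cup R_5$, or to a single part distinct from $R_1$ and $R_3$. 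In each of these finitely many cases I would write down an explicit assignment of one colour to each $R_j$, with consecutive parts differently coloured (so that it is automatically proper) and with the parts meeting $S$ assigned $1$ and those meeting $T$ assigned $2$ --- for instance $R_1,R_3\mapsto1$, $R_2,R_4\mapsto2$, $R_5\mapsto3$ when $T\subseteq R_2\cup R_4$, and similarly explicit choices in the remaining cases. The step I expect to be the real obstacle is the length-$3$ subcase of the bipartite argument, where one must track precisely which chords $P_5$-freeness forces; everything else is routine once Theorem~\ref{trianglefree} is invoked --- indeed this lemma is exactly \cite[Corollary 2.2]{MR3010736}, which one could also simply quote.
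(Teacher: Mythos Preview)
The paper does not prove this lemma at all: it is simply quoted as \cite[Corollary~2.2]{MR3010736}, so there is no in-paper argument to compare against. Your proposal supplies a complete, correct proof via the structure theorem (Theorem~\ref{trianglefree}); the bipartite case --- in particular the shortest-path argument forcing $s_2\sim q_2$ and then exhibiting the induced $P_5$ on $u_1s_2q_2u_2s_1$ --- is sound (the distinctness and non-edge checks all go through, including the degenerate possibility $s_2=q_1$), and the $5$-ring case is a straightforward finite check. Since you already note that one could simply cite the result, your write-up in fact does strictly more than the paper.
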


As a direct consequence of Theorem~\ref{pawfree} and Lemma~\ref{tfree}, we have the following result.

\begin{coro}\label{colorpawfree}
Suppose that $G$ is a $(P_5, paw)$-free graph. Let $S$ and $T$ ($T$ might be empty) be two disjoint stable sets where each vertex of $T$ has a neighbour in $S$. Then $G$ admits a $(\omega(G)+1)$-coloring where $S$ and $T$ receive one color respectively.
\end{coro}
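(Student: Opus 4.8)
The plan is to invoke the structural dichotomy of Theorem~\ref{pawfree}: since $G$ is $(P_5,paw)$-free it is in particular paw-free, hence either complete multipartite or triangle-free, and I will exhibit the required colouring in each case. Note first that whenever $T\neq\emptyset$, each vertex of $T$ has a neighbour in $S$, so $S\neq\emptyset$; and if $S=\emptyset$ then $T=\emptyset$ and any proper colouring obtained from one of the cases below already has the required (vacuous) form. So I may assume $S\neq\emptyset$.

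Suppose first that $G$ is triangle-free. Then $G$ is $(P_5,K_3)$-free, so Lemma~\ref{tfree} applies verbatim to the pair $(S,T)$ and yields a $3$-colouring of $G$ in which $S$ receives colour $1$ and $T$ receives colour $2$. If $G$ has at least one edge, then $\omega(G)\ge 2$, so $3\le\omega(G)+1$ and this colouring is of the desired form. If $G$ has no edge, then $\omega(G)=1$ and, as observed above, $T=\emptyset$; colouring every vertex with colour $1$ is proper, uses $1\le\omega(G)+1$ colours, and places $S$ in a single colour class. This settles the triangle-free case.

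Now suppose $G$ is complete multipartite with parts $A_1,\dots,A_k$, so that $\omega(G)=k$. Every stable set of $G$ lies inside a single part, since two vertices in distinct parts are adjacent; hence $S\subseteq A_i$ and $T\subseteq A_j$ for some indices $i,j$, and since every vertex of $T$ must have a neighbour in $S\subseteq A_i$ we have $i\neq j$ whenever $T\neq\emptyset$. Assign colour $1$ to all of $A_i$, colour $2$ to all of $A_j$ (when $T\neq\emptyset$), and the remaining colours, one per part, to the other parts. Each part is stable and distinct parts receive distinct colours, so this is a proper colouring; it uses at most $k=\omega(G)\le\omega(G)+1$ colours, and $S$ and $T$ each lie in a single colour class, as required.

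There is no genuinely hard step: the argument is a short case analysis driven by Theorem~\ref{pawfree} and Lemma~\ref{tfree}. The only points needing care are the degenerate configurations — an empty $S$ or $T$, an edgeless $G$, or a complete multipartite graph with very few parts — and the bookkeeping that the colour count never exceeds $\omega(G)+1$; observe that the slack ``$+1$'' is actually used only in the triangle-free case with $\omega(G)=2$, where three colours are genuinely needed.
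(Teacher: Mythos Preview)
Your proof is correct and follows exactly the route the paper intends: the corollary is stated as a direct consequence of Theorem~\ref{pawfree} and Lemma~\ref{tfree}, and you carry out precisely that two-case argument (complete multipartite versus triangle-free), with appropriate care for the degenerate edgeless situation.
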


Let $G$ be a $P_{5}$-free graph and $C=v_{1}v_{2}v_{3}v_{4}v_{5}v_1$ be an induced 5-cycle of $G$.

\begin{llemma}\label{5cycle}\cite[Lemma 3.1]{MR3010736}
For each $i\in \{1, \ldots,5\}$, let
\begin{itemize}
\item[](1). $S=\{u | N_{C}(u)=\emptyset\}$;
\item[](2). $R_{i}=\{u | N_{C}(u)=\{v_{i-1}, v_{i+1}\}\}$;
\item[](3). $\bar{R}_{i}=\{u | N_{C}(u)=\{v_{i-1}, v_{i}, v_{i+1}\}\}$;
\item[](4). $Y_{i}=\{u | N_{C}(u)=\{v_{i-2}, v_{i}, v_{i+2}\}\}$;
\item[](5). $P^{i}=\{u | N_{C}(u)=\{v_{i-1}, v_{i}, v_{i+1}, v_{i+2}\}\}$;
\item[](6). $T=\{u | N_{C}(u)=V(C)\}$.
\end{itemize}
Then $V(G)=T\bigcup S\bigcup\cup_{1\leq i\leq 5}(R_{i}\cup \bar{R}_{i}\cup Y_i\cup P^i)$.
\end{llemma}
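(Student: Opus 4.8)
The plan is to classify every vertex $u\in V(G)$ according to its trace $N_C(u)$ on the induced $5$-cycle $C=v_1v_2v_3v_4v_5v_1$, organising the work by the size $|N_C(u)|\in\{0,1,2,3,4,5\}$. First I would note that each of the sets $S,T,R_i,\bar R_i,Y_i,P^i$ is defined by prescribing $N_C(u)$ to equal one fixed subset of $V(C)$, and distinct prescriptions are mutually exclusive; hence these sets are automatically pairwise disjoint and it suffices to show that for every $u$ the set $N_C(u)$ is one of the $22$ listed subsets, equivalently to exclude the remaining $10$ subsets of $V(C)$.

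The easier half is pure combinatorics and uses no hypothesis on $G$. If $|N_C(u)|=0$ then $u\in S$ and if $|N_C(u)|=5$ then $u\in T$. If $|N_C(u)|=3$, I would observe that a $3$-element subset of $V(C)$ is either three consecutive vertices (the five sets $\bar R_i$) or it is not (the five sets $Y_i$); since $\binom{5}{3}=10=5+5$, every such $u$ lies in some $\bar R_i$ or some $Y_i$. Likewise every $4$-element subset omits exactly one vertex and so equals some $P^i$, and among the ten $2$-element subsets the five non-consecutive (chordal) pairs are exactly the $R_i$. Thus the only subsets still unaccounted for are the five singletons and the five consecutive pairs.

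It remains to rule these out using $P_5$-freeness, which is the single place the hypothesis enters. Suppose $|N_C(u)|=1$, say $N_C(u)=\{v_1\}$; then $u\notin V(C)$, since a vertex of $C$ has two neighbours on $C$, and $u\,v_1\,v_2\,v_3\,v_4$ is an induced $P_5$, because the only chords of $C$ are the non-edges $v_iv_{i+2}$ and $u$ has no neighbour among $v_2,v_3,v_4$; this contradicts $P_5$-freeness. Suppose instead $N_C(u)=\{v_1,v_2\}$ is a consecutive pair; again $u\notin V(C)$, and now $v_3\,v_4\,v_5\,v_1\,u$ is an induced $P_5$: consecutive vertices are joined, the only possible chords among $v_3,v_4,v_5,v_1$ are non-edges of $C$, and $u$ is adjacent to none of $v_3,v_4,v_5$. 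By the rotational symmetry $v_i\mapsto v_{i+1}$ of $C$ these two arguments cover all five singletons and all five consecutive pairs.

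Combining the two halves, $N_C(u)$ is one of the $22$ listed subsets for every $u$, so $V(G)=T\cup S\cup\bigcup_{1\le i\le 5}(R_i\cup\bar R_i\cup Y_i\cup P^i)$. The argument is essentially a finite case check, so there is no serious obstacle; the only real content is the two induced-$P_5$ exhibitions for $|N_C(u)|=1$ and for a consecutive $2$-element trace. The mild point to keep track of is that a vertex lying on $C$ contributes the non-consecutive pair $N_C(v_i)=\{v_{i-1},v_{i+1}\}$, so $v_i\in R_i$, and therefore the vertices of $C$ never fall among the excluded singletons or consecutive pairs.
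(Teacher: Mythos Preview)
Your proof is correct and is exactly the standard argument: the cases $|N_C(u)|\in\{0,3,4,5\}$ (and the nonconsecutive $2$-subsets) are handled by enumeration, while the singleton and consecutive-pair traces are excluded via the two explicit induced $P_5$'s you exhibit. The paper does not give its own proof of this lemma; it is quoted from \cite[Lemma~3.1]{MR3010736}, and your write-up matches the usual proof found there.
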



\begin{llemma}\label{5cyclest}
Suppose that $G$ is a $(P_{5}, \textit{HVN})$-free graph. Then for $1\leq i\leq 5$
\begin{itemize}
\item[](a). $T\bigcup(\cup_{1\leq i\leq 5}P^{i})$ is stable, $Y_{i}$ is stable,
\item[](b). $T$ is anticomplete to $\bar{R}_{i}$ and $Y_{i}$,
\item[](c). $Y_{i}$ is anticomplete to $P^{i+1}, P^{i+2}, P^{i-2}, \bar{R}_{i+2}\cup \bar{R}_{i-2}$,
\item[](d). $P^{i}$ is anticomplete to $\bar{R}_{j}$ with $j\neq i-2$,
\item[](e). $R_{i}\cup \bar{R}_{i}$ is complete to $R_{i+1}\cup \bar{R}_{i+1}$,
\item[](f). either $\bar{R}_{i}=\emptyset$ or $\bar{R}_{i+1}=\emptyset$,
\item[](g). $S$ is anticomplete to $R_{i}\cup \bar{R}_{i}$.
\end{itemize}
\end{llemma}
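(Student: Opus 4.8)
The plan is to fix an induced $5$-cycle $C=v_1v_2v_3v_4v_5v_1$, adopt the partition of $V(G)$ given in Lemma~\ref{5cycle}, and verify each of (a)--(g) by the same basic mechanism: whenever two vertices $x,y$ outside $C$ are adjacent and their neighbourhoods on $C$ overlap in a suitable way, the set $\{x,y\}$ together with an appropriate triangle or near-triangle sitting on $C$ produces either an induced $P_5$ or an induced $HVN$, contradicting our hypotheses. So for each item I would first write down exactly which vertices of $C$ each of the relevant classes sees, then exhibit the forbidden configuration explicitly. For instance, for (a): if $x,y\in T$ are adjacent, then $\{x,y,v_1,v_2,v_3\}$ contains a $K_4$ on $\{x,y,v_1,v_2\}$ wait—$v_1v_2$ is an edge and both $x,y$ are complete to $C$, so $\{x,y,v_1,v_2\}$ is a $K_4$; adding $v_3$, which is adjacent to $v_2$ and to both $x,y$ but not to $v_1$, gives too many edges—more cleanly, $\{x,y,v_1,v_3\}$ is a $K_4$ (since $v_1v_3\notin E$?) no; the right move is $\{v_1,v_2,x,y\}=K_4$ and $v_4$ sees $x,y$ but exactly... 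I would instead use: $x\in T$, $y\in P^i$ adjacent, or $x,y\in P^i\cup T$: a $K_4$ is formed on two consecutive cycle-vertices both classes dominate, plus $x,y$, and a third cycle-vertex adjacent to exactly two of those four yields the $HVN$. The stability of $Y_i$ follows the same way using that $\{v_{i-2},v_i\}$ or rather the non-edge structure of $v_{i-2},v_i,v_{i+2}$ on $C$ forces, with an adjacent pair in $Y_i$, an induced $P_5$ through the cycle.

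Concretely the skeleton I would use for each part is: \textbf{(b)} if $t\in T$ is adjacent to $r\in\bar R_i$, then $t,r,v_{i-1},v_{i+1}$ form a $K_4$ and $v_i$ is adjacent to exactly $v_{i-1},v_{i+1},r$ among them—that is three, not two, so instead take $t,r,v_{i-1},v_i$ as the $K_4$ and $v_{i+1}$ sees $v_i,t,r$: again three; the clean choice is the $K_4$ on $\{v_{i-1},v_i,v_{i+1},r\}$? that is only a $K_4$ if $v_{i-1}v_{i+1}\in E$, which is false. The genuinely correct witness is: $\{t,v_{i-1},v_i,v_{i+1}\}$ is not complete, but $\{t,v_i,r\}$ plus $v_{i-1}$: here $t$ complete to $C$, $r$ adjacent to $v_{i-1},v_i,v_{i+1}$, so $\{t,r,v_i,v_{i-1}\}$ and $\{t,r,v_i,v_{i+1}\}$ are both $K_4$'s; now $v_{i+2}$ is adjacent to $v_{i+1},t$ but not $v_i$, not $r$—exactly two vertices of the $K_4$ $\{t,r,v_i,v_{i+1}\}$, giving an $HVN$. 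I would run exactly this pattern for $Y_i$ as well ($t$ adjacent to $y\in Y_i$ uses the $K_4$ $\{t,y,v_{i-2},v_i\}$—wait $v_{i-2}v_i\notin E$). The safe universal trick: two adjacent vertices both complete to an edge $uv$ of $C$ give a $K_4$; then any cycle-vertex adjacent to exactly one endpoint of $uv$ and to exactly one of the two outside vertices completes an $HVN$. \textbf{(c),(d)}: if the two classes did have an edge between them, their combined neighbourhood on $C$ contains a vertex seen by one but not the other, and one checks a $P_5$ appears; e.g. $y\in Y_i$ adjacent to $p\in P^{i+1}$: $p$ sees $v_i,v_{i+1},v_{i+2},v_{i+3}$, $y$ sees $v_{i-2},v_i,v_{i+2}$, and $v_{i+3}$–$p$–$y$–$v_{i-2}$–$v_{i-1}$ is an induced $P_5$ once one verifies $v_{i-1}\notin N(y)\cup N(p)$ and $v_{i+3}v_{i-2}\notin E$.

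\textbf{(e),(f)}: For (e), if $r\in R_i\cup\bar R_i$ is nonadjacent to $r'\in R_{i+1}\cup\bar R_{i+1}$, then $r$ sees $v_{i-1},v_{i+1}$ (and maybe $v_i$), $r'$ sees $v_i,v_{i+2}$ (and maybe $v_{i+1}$), and the path $v_{i-1}$–$r$–$v_{i+1}$–$r'$–$v_{i+2}$ (or a variant) is an induced $P_5$; I'd also need $rr'\notin E$ to not create chords, and check the few subcases for the optional $v_i$. For (f): if $r\in\bar R_i$ and $r'\in\bar R_{i+1}$, then by (e) $rr'\in E$, and now $\{r,r',v_i,v_{i+1}\}$ is a $K_4$ (both $r,r'$ adjacent to $v_i$ and $v_{i+1}$, and $rr',v_iv_{i+1}\in E$), while $v_{i-1}$ is adjacent to $v_i$ and $r$ but not $v_{i+1}$, not $r'$—exactly two of the $K_4$—an $HVN$. \textbf{(g)}: $s\in S$ adjacent to $r\in R_i\cup\bar R_i$ gives the induced $P_5$ $s$–$r$–$v_{i+1}$–$v_{i+2}$–$v_{i+3}$, since $s$ sees no $v_j$ and $r$ sees only $v_{i-1},v_{i+1}$ (and perhaps $v_i$), so no unwanted chords. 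The main obstacle is purely bookkeeping: in parts (b)--(d) the optional membership in $\bar R$ versus $R$ and the several rotational cases of which cycle-vertex to pick force a modest case analysis, and one must each time confirm the witness subgraph is \emph{induced} (no extra chords) rather than merely present as a subgraph; there is no conceptual difficulty beyond choosing, in each case, the correct triangle or $4$-vertex path on $C\cup\{x,y\}$.
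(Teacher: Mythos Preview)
Your overall strategy matches the paper's, and (e), (f), (g) and the $T$--$\bar R_i$ half of (b) are essentially right (for (e) the clean path avoiding your subcase worry is $r'\,v_{i+2}\,v_{i-2}\,v_{i-1}\,r$). The genuine gap is that in (a) for $Y_i$, in the $T$--$Y_i$ half of (b), and throughout (c)--(d), you reach for a $P_5$ witness where none exists and an HVN is required.

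Concretely, your proposed $P_5$ for (c), namely $v_{i+3},p,y,v_{i-2},v_{i-1}$, is not even a sequence of distinct vertices: modulo $5$, $v_{i+3}=v_{i-2}$. More to the point, whenever $y\in Y_i$ is adjacent to $p\in P^{i+1}$, both $y$ and $p$ are complete to the cycle-edge $v_{i-2}v_{i+2}$, so $\{y,p,v_{i-2},v_{i+2}\}$ is already a $K_4$; since every common neighbour of $y,p$ on $C$ forms a triangle with them, one checks that no induced $P_5$ on $\{y,p\}\cup V(C)$ exists at all. The correct witness is the HVN $\{y,p,v_{i-2},v_{i+2},v_{i+1}\}$ (here $v_{i+1}$ is adjacent to exactly $p$ and $v_{i+2}$), and analogous HVNs---not $P_5$'s---dispose of every remaining case in (a)--(d). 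Your own ``safe universal trick''---a $K_4$ on two adjacent outside vertices plus a cycle-edge they both dominate, completed to an HVN by a cycle-vertex seeing exactly two of the four---is precisely the right mechanism; you just need to apply it uniformly instead of defaulting to $P_5$-hunting, which fails whenever the two $C$-neighbourhoods overlap on an edge of $C$.
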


\begin{proof}
Let $u,v\in T\bigcup(\cup_{1\leq i\leq 5}P^{i})$. If $u, v\in T$, then $uv\notin E(G)$, otherwise $\{u, v, v_{1}, v_{2}, v_{4}\}$ induces an HVN. If $u\in T$ and $v\in P^{i}$ for some $i$, then $uv\notin E(G)$, otherwise $\{u, v, v_{i-1}, v_{i}, v_{i+2}\}$ induces an HVN. Suppose that $u,v\in \cup_{1\leq i\leq 5}P^{i}$. If $uv\in E(G)$, then for some $1\leq i\leq5$, either $\{u,v, v_{i+1}, v_{i+2}, v_{i-1}\}$ or $\{u,v, v_{i+1}, v_{i+2},v_{i}\}$ induces an HVN, which leads to a contradiction. Now let $u,v\in Y_{i}$ such that $uv\in E(G)$. Then $\{u, v, v_{i-2}, v_{i+2}, v_{i}\}$ induces an HVN. Thus this proves (a).

Let $u\in T$ and $v\in \bigcup_{1\leq i\leq 5}(\bar{R}_{i}\cup Y_{i}\cup P^{i})$. Suppose that $uv\in E(G)$.  If $v\in \bar{R}_{i}$, then $\{u, v, v_{i}, v_{i+1},v_{i+2}\}$ induces an HVN. If $v\in Y_{i}$, then $\{u, v, v_{i-2}, v_{i+2}, v_{i-1}\}$ induces an HVN. Hence this completes the proof of (b).

For $1\leq i\leq 5$, let $u\in Y_{i}$ and $v\in P^{i+1}\cup P^{i+2}$. If $uv\in E(G)$, then $\{u,v,v_{i-2}, v_{i+2},v_{i+1}\}$ induces an HVN, which leads to a contradiction. Suppose that $v\in P^{i-2}$, then $uv\notin E(G)$ as otherwise $\{u,v,v_{i-2}, v_{i+2},v_{i-1}\}$ induces an HVN. Now let $w\in \bar{R}_{i+2}\cup \bar{R}_{i-2}$. Then $uw\notin E(G)$, otherwise either $\{v_{i-2}, v_{i+2}, u, w, v_{i+1}\}$ or $\{v_{i-2}, v_{i+2}, u, w, v_{i-1}\}$ induces an HVN, which proves (c).

For $1\leq i\leq 5$, let $u\in P^{i}$ and $v\in \bar{R}_{j}$ with $j\neq i-2$. Suppose that $j=i$. Then $uv\notin E(G)$, otherwise $\{u, v, v_{i}, v_{i+1}, v_{i+2}\}$ induces an HVN, which leads to a contradiction. Similarly, we have that if $j=i-1,i+1, i+2$, then $uv\notin E(G)$. Thus this proves (d).

For $1\leq i\leq 5$, let $u\in R_{i}\cup \bar{R}_{i}$ and $v\in R_{i+1}\cup \bar{R}_{i+1}$. If $uv\notin E(G)$, then $vv_{i+2}v_{i-2}v_{i-1}u=P_{5}$. Hence this proves (e).

Suppose that $\bar{R}_{i}$ and $\bar{R}_{i+1}$ are both non-empty. Then by (e), we have that $\bar{R}_{i}$ is complete to $\bar{R}_{i+1}$. Let $u\in \bar{R}_{i}$ and $v\in \bar{R}_{i+1}$. Then $\{u,v, v_{i}, v_{i+1}, v_{i+2}\}$ induces an HVN, which leads to a contradiction. Hence this proves (f).

Let $s\in S$ and $u\in R_{i}\cup \bar{R}_{i}$. If $us\in E(G)$, then $suv_{i+1}v_{i+2}v_{i-2}=P_{5}$, and so this completes the proof of (g).
\end{proof}

\begin{llemma}\label{homogeneous}
Let $G$ be a $(P_{5}, \textit{HVN})$-free graph. Suppose that $H$ is an induced subgraph of $G$ and $H$ contains an induced paw. Let $S=\{u\in V(G)| N_{H}(u)=\emptyset\}$.  Then $S$ is homogeneous in $G$ and $G[S]$ is $\omega(G)$-colorable.
\end{llemma}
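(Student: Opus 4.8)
\noindent\emph{Proof plan.} I would prove the two assertions in turn, anchoring every argument on the paw inside $H$: by the very definition of $S$, the triangle and the pendant vertex of that paw are anticomplete to $S$. Two harmless preliminary remarks. First, $G$ itself contains this paw, so $\omega(G)\ge 3$. Second, we may assume $H$ has no isolated vertex, since deleting the isolated vertices of $H$ changes neither $S$ (an isolated vertex of $H$ lies in $S$ before and after deletion, and no other vertex's $H$-neighbourhood is affected) nor the hypothesis that $H$ contains an induced paw.

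To show that $S$ is homogeneous I would argue by contradiction: suppose some $v\in V(G)\setminus S$ has a neighbour $s_1\in S$ and a non-neighbour $s_2\in S$. As $v\notin S$, it has a neighbour $h\in V(H)$, and $h$, being non-isolated in $H$, has a neighbour $h'\in V(H)$; moreover $v$ is not complete to $V(H)$, for otherwise $v$ together with the paw of $H$ would induce an $HVN$. From here the plan is a short case analysis producing either an induced $P_5$ or an induced $HVN$. If $v$ fails to be adjacent to both ends of some edge of $H$ near $h$ (in particular if $v$ is not adjacent to $h'$), then---possibly after replacing $\{s_1,s_2\}$ by a pair of vertices of $S$ joined by an edge of $G[S]$, or after following a shortest path of $G[S]$ from $s_2$ toward $s_1$---the vertices $s_2,s_1,v,h,h'$, or a short modification of them, induce a $P_5$, because $S$ is anticomplete to $\{h,h'\}\subseteq V(H)$. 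If instead $v$ is adjacent to three pairwise adjacent vertices of $H$, those four vertices form a $K_4$ in which $s_1$, or the pendant vertex of the paw, is adjacent to exactly two; this is an $HVN$ (when $v$ is complete to the paw's triangle but misses its pendant one instead extracts an induced $P_5$ $s_2,s_1,v,a,d$ through the pendant). The step I expect to be the genuine obstacle is the bookkeeping when the neighbours of $v$ in $H$ all avoid the paw, or when $H$ is disconnected: there one must chase a shortest path inside $H$ from a non-neighbour of $v$ to $h$ and track, vertex by vertex, adjacency to $v$, $s_1$ and $s_2$, in order to pull out an induced $P_5$ ending in $s_1,s_2$. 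This is routine in the style of Lemma~\ref{5cyclest}, but needs care.

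Granting that $S$ is homogeneous, the colouring bound is quick. If $S=\emptyset$ there is nothing to prove, so assume $S\ne\emptyset$; since $G$ is connected and $V(G)\setminus S$ contains the vertices of the paw, there is an edge between $S$ and $V(G)\setminus S$, so some $v\in V(G)\setminus S$ has a neighbour in $S$, and by homogeneity $v$ is complete to $S$. I then claim $G[S]$ is paw-free: an induced paw $\{p,q,r,t\}$ in $G[S]$ with triangle $pqr$ and $t$ adjacent only to $p$ among $\{p,q,r\}$ would make $\{v,p,q,r\}$ a $K_4$ (as $v$ is complete to $S$) with $t$ adjacent to exactly two of its vertices, namely $v$ and $p$, so $\{v,p,q,r,t\}$ would induce an $HVN$, contradicting that $G$ is $HVN$-free. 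Hence, by Theorem~\ref{pawfree}, $G[S]$ is complete multipartite or triangle-free. If it is complete multipartite it is perfect, so $\chi(G[S])=\omega(G[S])\le\omega(G)$. If it is triangle-free, then $G[S]$ is $(P_5,K_3)$-free, so by Theorem~\ref{trianglefree} it is bipartite or a $5$-ring, and in either case $\chi(G[S])\le 3\le\omega(G)$. In all cases $G[S]$ is $\omega(G)$-colourable, which completes the proof.
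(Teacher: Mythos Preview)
Your overall strategy coincides with the paper's: prove homogeneity by exhibiting a forbidden $P_5$, then deduce that $G[S]$ is paw-free (via a vertex complete to $S$) and hence $\omega(G)$-colourable. Your treatment of the colouring bound is essentially identical to the paper's; the paper simply quotes Corollary~\ref{colorpawfree}, which packages the two cases you spell out.

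For homogeneity, however, you are working much harder than necessary, and part of your case analysis is off. The paper's argument is one line after the setup: since $w$ (your $v$) has a neighbour in $V(H)$ but is not complete to $V(H)$, one chooses $u\in V(H)$ with $wu\in E(G)$ having a neighbour $v'\in V(H)$ with $wv'\notin E(G)$; then $v'\,u\,w\,s_1\,s_2$ is an induced $P_5$ (after reducing to $s_1s_2\in E(G)$ by assuming $G[S]$ connected, exactly as you do via a shortest path). No triangle case and no further $HVN$ case is needed; the only use of $HVN$-freeness is the single observation that $w$ cannot be complete to the paw. Your branch ``$v$ adjacent to three pairwise adjacent vertices of $H$'' is therefore superfluous, and as stated it is not correct: $s_1$ is adjacent to exactly one vertex of that $K_4$ (namely $v$), not two, so $s_1$ does not witness an $HVN$; and the pendant-vertex alternative only works when the triangle in question happens to be the paw's triangle. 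A minor additional slip: deleting isolated vertices of $H$ can enlarge $S$ (a vertex whose only $H$-neighbour is an isolated $h_0$ moves into $S$), so that reduction does not literally leave $S$ unchanged.
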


\begin{proof}
We may assume that $G[S]$ is connected. Suppose to the contrary that $S$ is not homogeneous in $G$, that is, there exist $w\in V(G)\backslash S$ and $s_{1}, s_{2}\in S$ such that $s_{1}s_{2}\in E(G)$, $s_{1}w\in E(G)$ and $s_{2}w\notin E(G)$. Since $w\in V(G)\backslash S$, there exists $u\in V(H)$ such that $wu\in E(G)$. Notice that $w$ is not complete to $V(H)$ as $G$ is HVN-free and $H$ contains an induced paw. Thus there exists $v\in V(H)$ such that $uv\in E(G)$ and $vw\notin E(G)$. Hence $vuws_{1}s_2=P_5$, which leads to a contradiction. Thus $S$ is homogeneous in $G$. Since $G$ is connected, there must exist a vertex in $V(G)\backslash S$ which is complete to $S$, and so  $G[S]$ is paw-free. Therefore by Lemma~\ref{colorpawfree} we have that $G[S]$ is $\omega(G)$-colorable.
\end{proof}

\section{$T$-5-Wheel}

Recall that a $T$-5-wheel is a graph consisting of a chordless 5-cycle $C=v_{1}v_{2}v_{3}v_{4}v_{5}v_1$ and an additional vertex $x$  where $x$ is adjacent to   three consecutive vertices on $C$, say $v_1, v_2, v_5$. Let $G$ be a $(P_{5}, \textit{HVN})$-free graph and $\Sigma$ be an induced $T$-5-wheel of $G$. In this section, we will show that $G$ is $(\omega(G)+3)$-colorable.

For $1\leq i\leq 5$, let $R_i, \bar{R}_i, Y_i, P^i$ and $T$ be as defined in Lemma~\ref{5cycle}, and in this section, we especially use these notations to denote the subsets of vertices adjacent to   $C$ but not adjacent to   $x$. Clearly, $x\in \bar{R}_1$, and so $\bar{R}_1\neq\emptyset$.  Now define $S=\{u | N_{\Sigma}(u)=\emptyset\}$,
\begin{itemize}
\item $R_{i,x}=\{u | N_{\Sigma}(u)=\{v_{i-1}, v_{i+1}, x\}\}$,
\item $\bar{R}_{i,x}=\{u | N_{\Sigma}(u)=\{v_{i-1}, v_{i}, v_{i+1},x\}\}$,
\item $Y_{i,x}=\{u | N_{\Sigma}(u)=\{v_{i-2}, v_{i}, v_{i+2}, x\}\}$,
\item $P^{i,x}=\{u | N_{\Sigma}(u)=\{v_{i-1}, v_{i}, v_{i+1}, v_{i+2}, x\}\}$, and
\item $T_x=\{u | N_{\Sigma}(u)=V(\Sigma)\}$.
\end{itemize}

\begin{rem}\label{rem1}
Notice that $x$ is an additional vertex to $C$, and so all results of Lemma~\ref{5cyclest}  hold for $R_{i}, \bar{R}_{i}, Y_{i}, P^{i}, T$ defined in Lemma~\ref{5cycle} and $R_{i,x}, \bar{R}_{i,x}, Y_{i,x}, P^{i,x}, S$ defined above.
\end{rem}
Since $x\in \bar{R}_1$, it follows from Lemma~\ref{5cyclest}(b) that $T_x=\emptyset$. By Lemma~\ref{5cyclest}(f) and Remark~\ref{rem1}, we may always assume that $\bar{R}_4\cup \bar{R}_{4, x}=\emptyset$.

\begin{llemma}\label{T5}
Let $G$ be a $(P_{5}, \textit{HVN})$-free graph. Suppose that $G$ contains an induced $T$-5-wheel $\Sigma$. Then
$$
V(G)=N_{G}(x)\bigcup R_{1}\bigcup R_3\bigcup R_4\bigcup \bar{R}_1\bigcup \bar{R}_3\bigcup Y_{2}\bigcup Y_{5}\bigcup (\cup_{i=2,3,4}P^{i})\bigcup T\bigcup S,
$$ and
$$
N_{G}(x)=R_{1,x}\bigcup R_{2,x}\bigcup R_{5,x} \bigcup\bar{R}_{1,x}\bigcup \bar{R}_{3,x}\bigcup Y_{1,x}\bigcup  P^{3,x}.
$$
\end{llemma}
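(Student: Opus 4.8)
The plan is to determine $N_\Sigma(u)$ for every $u\in V(G)\setminus V(\Sigma)$ by eliminating all the ``small'' attachment types and then collecting the survivors. Applying Lemma~\ref{5cycle} to the induced $5$-cycle $C$, each such $u$ has $N_C(u)$ equal to one of the six patterns listed there, so, writing $N_\Sigma(u)=N_C(u)$ when $ux\notin E(G)$ and $N_\Sigma(u)=N_C(u)\cup\{x\}$ when $ux\in E(G)$, every vertex of $G$ lies in one of the named subsets, the only possible exception being a neighbour of $x$ anticomplete to $C$. We already know $T_x=\emptyset$ and may assume $\bar R_4=\bar R_{4,x}=\emptyset$; moreover Lemma~\ref{5cyclest}(f) (using $x\in\bar R_1$, so that $\bar R_1\cup\bar R_{1,x}\ne\emptyset$) gives $\bar R_2=\bar R_{2,x}=\bar R_5=\bar R_{5,x}=\emptyset$. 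Hence the two displayed identities will follow once we establish: (i) no neighbour of $x$ is anticomplete to $C$; (ii) $R_2=R_5=Y_1=Y_3=Y_4=P^1=P^5=\emptyset$; and (iii) $R_{3,x}=R_{4,x}=Y_{2,x}=Y_{3,x}=Y_{4,x}=Y_{5,x}=P^{1,x}=P^{2,x}=P^{4,x}=P^{5,x}=\emptyset$.

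Two remarks shorten the work. First, $\Sigma$ has the automorphism $\sigma$ fixing $x$ and $v_1$ and transposing $v_2\leftrightarrow v_5$ and $v_3\leftrightarrow v_4$; it permutes the sets in (ii) and (iii) within their $\sigma$-orbits, so it suffices to handle $R_2,Y_1,Y_3,P^1$ in (ii) and $R_{3,x},Y_{2,x},Y_{3,x},P^{1,x},P^{2,x}$ in (iii). Second, $C':=xv_2v_3v_4v_5x$ is also an induced $5$-cycle of $G$, so Lemma~\ref{5cycle} applied to $C'$ forbids any vertex adjacent to exactly one, or to exactly two consecutive, vertices of $C'$. This remark immediately gives (i) (such a vertex would have $N_{C'}(u)=\{x\}$), kills $R_2$ and $R_5$ (there $N_{C'}(u)$ is a single vertex, $\{v_3\}$ resp.\ $\{v_4\}$), and kills $Y_1$ (there $N_{C'}(u)=\{v_3,v_4\}$, two consecutive vertices of $C'$).

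For the sets not reached by the $C'$-trick I would display an explicit forbidden induced subgraph inside $\Sigma\cup\{u\}$. For the $P_5$'s: if $u$ is a non-neighbour of $x$ with $u\sim v_1,v_3$ and $u\not\sim v_4$, then $v_4\,v_3\,u\,v_1\,x$ is an induced $P_5$ (the chords $v_1v_3,v_1v_4,xv_3,xv_4$ are absent since $C$ is chordless and $x$ misses $v_3,v_4$), which kills $Y_3$ and $P^1$, and $\sigma$ then kills $Y_4$ and $P^5$; likewise $v_1\,x\,u\,v_4\,v_3$ is an induced $P_5$ when $u\in R_{3,x}$, and $v_3\,v_4\,u\,x\,v_1$ is an induced $P_5$ when $u\in Y_{2,x}$, so $\sigma$ disposes of $R_{4,x}$ and $Y_{5,x}$. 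For the \textit{HVN}'s: if $u\in Y_{3,x}$, then $\{u,v_1,v_5,x\}$ is a $K_4$ and $v_2$ is adjacent to exactly $v_1$ and $x$ of it, so $\{u,v_1,v_5,x,v_2\}$ induces an \textit{HVN}; if $u\in P^{1,x}\cup P^{2,x}$, then $\{u,v_1,v_2,x\}$ is a $K_4$ and $v_3$ is adjacent to exactly $v_2$ and $u$ of it, so $\{u,v_1,v_2,x,v_3\}$ induces an \textit{HVN}; then $\sigma$ finishes $Y_{4,x}$, $P^{4,x}$ and $P^{5,x}$. Each of these contradicts $(P_5,\textit{HVN})$-freeness, proving (ii) and (iii).

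I expect the main obstacle to be case (iii): there the $C'$-argument is useless — for each of those attachment types $N_{C'}(u)$ is a perfectly legal neighbourhood of $C'$ — so one genuinely has to find the right five-vertex witness for every type. The uniform way to do this is to notice that a $K_4$ (hence, after one more vertex, an \textit{HVN}) appears exactly when $u$ is complete to one of the two triangles $\{v_1,v_2,x\}$, $\{v_1,v_5,x\}$ of $\Sigma$, and that otherwise the absent edge of a candidate $K_4$ can be exploited, typically after routing the $P_5$ through $x$ to dodge the chords of $C$. Once the witnesses are fixed, each verification is a single adjacency check, and the rest of the bookkeeping (index arithmetic mod $5$, $\sigma$-orbit accounting) is routine.
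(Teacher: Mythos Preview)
Your argument is correct. The overall architecture --- classify $u$ by $N_C(u)$ via Lemma~\ref{5cycle} and then decide the edge $ux$ by exhibiting a forbidden $P_5$ or \textit{HVN} in $\Sigma\cup\{u\}$ --- is exactly what the paper does, and in fact every explicit five-vertex witness you write down (for $Y_3$, $P^1$, $R_{3,x}$, $Y_{2,x}$, $Y_{3,x}$, $P^{1,x}$, $P^{2,x}$) is literally the same configuration the paper uses, up to reversal of the path. Where you differ is purely organizational: the paper runs through all indices by hand (writing ``similarly'' where you invoke $\sigma$), and it handles Case~1, $R_2$, $R_5$, $Y_1$ by direct $P_5$ witnesses rather than by your observation that $C'=xv_2v_3v_4v_5x$ is a second induced $5$-cycle to which Lemma~\ref{5cycle} applies. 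Your $C'$-trick and the explicit symmetry $\sigma$ buy a shorter write-up and fewer repeated verifications; the paper's version buys a single linear pass with no auxiliary structures. Neither gains anything mathematically over the other.
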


\pf Recall that $C=v_{1}v_{2}v_{3}v_{4}v_{5}v_1$ is the induced 5-cycle in $\Sigma$. Clearly, $v_i\in R_{i,x}$ for $i=1,2,5$,  and $v_j\in R_j$ for $j=3,4$. Recall that $x\in \bar{R}_1$. Now let $u\in V(G)\backslash V(\Sigma)$.

{\it \bf Case 1: $u$ is not adjacent to   $C$.} If $u$ is adjacent to  $x$, then $uxv_{2}v_{3}v_{4}=P_{5}$, which leads to a contradiction. Thus in this case $u\in S$.

{\it\bf Case 2: $N_C(u)=\{v_{i-1}, v_{i+1}\}$.} Suppose that $i=1$. Then either $u\in R_{1}$ or $u\in R_{1,x}$.  Suppose that $i=3$. If $ux\in E(G)$, then $v_{3}v_{4}uxv_{1}=P_{5}$, and so $u\in R_{3}$. Similarly, if $i=4$, then $u\in R_{4}$. Suppose that $i=2$. Then $u$ must be adjacent to  $x$, otherwise $xv_{1}uv_{3}v_{4}=P_{5}$. Thus $u\in R_{2,x}$. Similarly, if $i=5$, then $u\in R_{5,x}$.

{\it\bf Case 3: $N_C(u)=\{v_{i-1},v_i, v_{i+1}\}$.} Since $\bar{R}_1\neq\emptyset$, by Lemma~\ref{5cyclest}(f), we have that $\bar{R}_2\cup \bar{R}_{2,x}=\bar{R}_5\cup \bar{R}_{5,x}=\emptyset$. Hence in this case,  $u\in \bar{R}_{1}\cup \bar{R}_{1,x}$ or $u\in \bar{R}_{3}\cup \bar{R}_{3,x}$.

{\it\bf Case 4: $N_C(u)=\{v_{i-2}, v_i, v_{i+2}\}$.} Suppose that $i=1$. Since $uv_{4}v_{5}xv_{2}$ cannot be an induced $P_{5}$, we have that $ux\in E(G)$, and so $u\in Y_{1,x}$. Suppose that $i=2$. If $ux\in E(G)$, then $v_{1}xuv_{4}v_{3}=P_{5}$, which leads to a contradiction. Thus $ux\notin E(G)$ and $u\in Y_{2}$. Similarly, if $i=5$, then $ux\notin E(G)$ and so $u\in Y_{5}$. Suppose that $i=3$. Then $ux\in E(G)$, otherwise $xv_{1}uv_{3}v_{4}=P_{5}$. This implies that $\{v_{5}, x, v_{1}, u, v_{2}\}$ induces an HVN. Thus $i\neq 3$, and similarly, $i\neq 4$.

{\it\bf Case 5: $N_C(u)=\{v_{i-1},v_i, v_{i+1}, v_{i+2}\}$.} Suppose that $i=1$. If $ux\notin E(G)$, then $xv_{1}uv_{3}v_{4}=P_{5}$. If $ux\in E(G)$, then $\{u,v_{1},x,v_{2},v_{3}\}$ induces an HVN, and so we have that $i\neq 1$. By symmetry, we have that $i\neq 5$. Suppose that $i=2$. If $ux\in E(G)$, then $\{u,v_{1},x,v_{2},v_{3}\}$ induces an HVN, and so $u\in P^{2}$. Similarly, if $i=4$, then $ux\notin E(G)$ and $u\in P^{4}$. If $i=3$, then either $u\in P^{3}$ or $P^{3,x}$.

{\it\bf Case 6: $uv_i\in E(G)$ for all $1\leq i\leq 5$.} Since $T_x=\emptyset$,  we have that  $u\in T$. \qed

\begin{llemma}\label{T5stable}
Let $G$ be a $(P_{5}, \textit{HVN})$-free graph. Suppose that $G$ contains an induced $T$-5-wheel $\Sigma$. Then
\begin{itemize}
\item[](1). $R_{4}$ ($R_{3}\cup \bar{R}_{3}\cup \bar{R}_{3,x}$) is a stable set if $R_{3}$ ($R_{4}$) is not stable,
\item[](2). $R_{2,x}\cup R_{5,x}$ is stable,
\item[](3). $Y_{2}, Y_{5}, Y_{1,x}, P^{i} ($i=2,3,4$), P^{3,x}, T$ are stable sets, and
\item[](4). $Y_{1, x}$ is anticomplete to $\bar{R}_{3}\cup \bar{R}_{3,x}$.
\end{itemize}
\end{llemma}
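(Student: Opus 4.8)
The plan is to prove all four items with a single device: whenever a ``forbidden'' edge is assumed to exist, I will exhibit four vertices inducing a $K_4$ in $G$ together with a fifth vertex drawn from $V(C)\cup\{x\}$ that is adjacent to exactly two of them, that is, an induced $HVN$, contradicting that $G$ is $HVN$-free. First I would record which of the small sets vanish: by Lemma~\ref{T5} and the assumptions preceding it, $R_{3,x}=R_{4,x}=\bar{R}_4=\bar{R}_{4,x}=T_x=\emptyset$, while $Y_{j,x}=\emptyset$ for $j\neq1$ and $P^{j,x}=\emptyset$ for $j\neq3$. Consequently the sets of Lemma~\ref{5cycle} restrict as expected: the original $R_3\cup\bar{R}_3$ equals $R_3\cup\bar{R}_3\cup\bar{R}_{3,x}$, the original $R_4\cup\bar{R}_4$ equals $R_4$, and each of $Y_2,Y_5,Y_{1,x},P^2,P^3,P^4,P^{3,x},T$ is contained in one of the original stable sets $Y_1,Y_2,Y_5$ or $T\cup\bigcup_{1\leq i\leq 5}P^i$ of Lemma~\ref{5cyclest}(a). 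Together with Remark~\ref{rem1}, this already settles item (3).

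For item (2), I would take adjacent $u,v\in R_{2,x}\cup R_{5,x}$: each of $u,v$ is adjacent to $v_1$ and to $x$, and $v_1x\in E(G)$ since $x\in\bar{R}_1$, so $\{u,v,v_1,x\}$ induces a $K_4$; then $v_3$ (if $u,v\in R_{2,x}$), or $v_4$ (if $u,v\in R_{5,x}$), or $v_2$ (if $u\in R_{2,x}$ and $v\in R_{5,x}$) has exactly two neighbours in this $K_4$ --- namely $\{u,v\}$ in the first two cases and $\{v_1,x\}$ in the last --- giving an induced $HVN$, a contradiction. For item (4), I would take $u\in Y_{1,x}$ adjacent to $w\in\bar{R}_3\cup\bar{R}_{3,x}$: since $v_3,v_4$ are common neighbours of $u$ and $w$, the set $\{u,w,v_3,v_4\}$ induces a $K_4$, but $v_2$ is adjacent to $w$ and $v_3$ and to neither $u$ nor $v_4$, again an induced $HVN$.

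Item (1) is the one that genuinely uses Lemma~\ref{5cyclest}(e): applied with $i=3$ and combined with the reductions above, it shows that $R_3\cup\bar{R}_3\cup\bar{R}_{3,x}$ is complete to $R_4$. For the first clause I would assume $ab\in E(G)$ with $a,b\in R_3$ and $cd\in E(G)$ with $c,d\in R_4$; then $\{a,b,c,d\}$ induces a $K_4$, and $v_3$ is adjacent to $c,d$ (as $N_C(c)=N_C(d)=\{v_3,v_5\}$) but to neither $a$ nor $b$ (as $N_C(a)=N_C(b)=\{v_2,v_4\}$), an induced $HVN$; hence $R_3$ not stable forces $R_4$ stable. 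For the second clause I would assume $cd\in E(G)$ with $c,d\in R_4$ and $ef\in E(G)$ with $e,f\in R_3\cup\bar{R}_3\cup\bar{R}_{3,x}$; then $\{c,d,e,f\}$ induces a $K_4$ by the completeness just noted, while $v_4$ lies on $C$, is a neighbour of every vertex of $R_3\cup\bar{R}_3\cup\bar{R}_{3,x}$ and of no vertex of $R_4$, so it is adjacent to exactly $e$ and $f$ among $\{c,d,e,f\}$, again an induced $HVN$; hence $R_4$ not stable forces $R_3\cup\bar{R}_3\cup\bar{R}_{3,x}$ stable.

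The one delicate point lies in item (1): one has to be sure that Lemma~\ref{5cyclest}(e) supplies completeness precisely between $R_3\cup\bar{R}_3\cup\bar{R}_{3,x}$ and $R_4$ --- which is exactly why the emptiness of $R_{3,x},R_{4,x},\bar{R}_4,\bar{R}_{4,x}$ must be pinned down first --- and that the separating cycle vertex ($v_3$ in the first clause, $v_4$ in the second) really is adjacent to exactly two vertices of the $K_4$ one has built. Every other verification is a short inspection of the definition of an $HVN$.
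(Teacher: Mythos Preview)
Your proof is correct and follows essentially the same strategy as the paper: items~(3) and~(4) are reduced to Lemma~\ref{5cyclest} via Remark~\ref{rem1}, item~(1) uses the completeness from Lemma~\ref{5cyclest}(e), and in every case an explicit $HVN$ is exhibited on a $K_4$ plus an apex from $V(C)\cup\{x\}$. The only differences are cosmetic choices of apex vertex (you use $v_3$ or $v_4$ where the paper uses $v_5$ in item~(1) and the pure cases of item~(2)); you should note, as the paper does in the mixed case of~(2), that the apex is distinct from the $K_4$ vertices --- this holds because $v_3$ (resp.\ $v_4$) is isolated inside $R_3$ (resp.\ $R_4$), and an edge between $R_{2,x}$ and $R_{5,x}$ automatically forces $u\neq v_2$.
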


\begin{proof}
Clearly, (3) directly follows from Lemma~\ref{5cyclest}(a) and Remark~\ref{rem1}, and (4) follows from Lemma~\ref{5cyclest}(c) and Remark~\ref{rem1}. Thus it is left to prove (1) and (2).

Now suppose that $R_{3}$ and $R_{4}$ are both non-empty. It follows from Lemma~\ref{5cyclest}(e) that $R_{3}$ is complete to $R_{4}$. Suppose that $R_{3}$ is not stable, and let $u, v\in R_{3}$ such that $uv\in E(G)$. Suppose that $R_{4}$ is not stable, and let $y,z\in R_{4}$ such that $yz\in E(G)$. Thus $\{v_{5}, u,v,y,z\}$ induces an HVN. Thus $R_{4}$ must be stable. By similar arguments, if $R_4$ is not stable, then $R_{3}\cup \bar{R}_{3}\cup \bar{R}_{3,x}$ must be stable. Thus this proves (1).


Let $u, v\in R_{2,x}\cup R_{5,x}$ such that $uv\in E(G)$. First suppose that $u, v\in R_{2,x}$. Then $\{u, v, v_{1}, x,v_{5}\}$ induces an HVN, which leads to a contradiction. Thus $R_{2,x}$ is stable. Similarly, we can show that $R_{5,x}$ is stable. Now suppose that $u\in R_{2,x}$ and $v\in R_{5,x}$. Notice that if $u=v_2$ and $v=v_5$, then $uv\notin E(G)$. Thus we may assume that $u\neq v_2$. Then $\{u, v, v_{1}, x, v_{2}\}$ induces an HVN. Thus $R_{2,x}$ must be anticomplete to $R_{5,x}$, which completes the proof of (2).
\end{proof}

\begin{llemma}\label{T5S}
Let $G$ be a $(P_{5}, \textit{HVN})$-free graph. Suppose that $G$ contains an induced $T$-5-wheel $\Sigma$. Then $S$ is
\begin{itemize}
\item[](i). anticomplete to $\cup_{i=1,3,4}R_{i}\bigcup (\cup_{i=1,2,5}R_{i,x})\bigcup [\cup_{i=1,3}(\bar{R}_i\cup \bar{R}_{i,x})]$, and
\item[](ii). anticomplete to $P^{2}\cup P^{4}$.
\end{itemize}
\end{llemma}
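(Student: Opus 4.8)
The plan is to handle (i) and (ii) separately. Part (i) will be essentially a corollary of earlier results, while part (ii) needs short explicit induced-$P_5$ arguments that genuinely use the hub $x$ of the $T$-5-wheel.

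For (i): first note that a vertex $u\in V(G)\setminus V(\Sigma)$ with no neighbour on $C$ is also non-adjacent to $x$ (otherwise $uxv_2v_3v_4$ is an induced $P_5$), so as already observed in the proof of Lemma~\ref{T5} we have $S=\{u\mid N_C(u)=\emptyset\}$. Hence, by Remark~\ref{rem1}, Lemma~\ref{5cyclest}(g) applies with the role of its ``$S$'' played by our $S$ and the role of its ``$R_i$'' (resp.\ ``$\bar R_i$'') played by $R_i\cup R_{i,x}$ (resp.\ $\bar R_i\cup\bar R_{i,x}$), giving that $S$ is anticomplete to $\bigcup_{1\le i\le5}(R_i\cup R_{i,x}\cup\bar R_i\cup\bar R_{i,x})$. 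The set displayed in (i) is contained in this union (the remaining $R$-type and $\bar R$-type sets being empty by Lemma~\ref{T5}), so (i) follows. If one prefers a self-contained witness: for $s\in S$ adjacent to $u$ in any $R_i\cup R_{i,x}\cup\bar R_i\cup\bar R_{i,x}$, the path $s\,u\,v_{i+1}v_{i+2}v_{i-2}$ is an induced $P_5$, since $N_C(u)\subseteq\{v_{i-1},v_i,v_{i+1}\}$ and $s$ is anticomplete to $C$.

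For (ii): suppose $s\in S$ has a neighbour $u\in P^2\cup P^4$; recall that in this section $u\not\sim x$ and $s$ is anticomplete to $\Sigma$, while $N_C(x)=\{v_1,v_2,v_5\}$. If $u\in P^2$, so $N_C(u)=\{v_1,v_2,v_3,v_4\}$, I would take $s\,u\,v_4\,v_5\,x$: the only edges among these five vertices are $su,\,uv_4,\,v_4v_5,\,v_5x$, because $s$ is anticomplete to $\Sigma$, $u\not\sim v_5$, $u\not\sim x$, and $x\not\sim v_4$; this is an induced $P_5$, a contradiction. Symmetrically, if $u\in P^4$, so $N_C(u)=\{v_3,v_4,v_5,v_1\}$, the path $s\,u\,v_3\,v_2\,x$ is an induced $P_5$ since $u\not\sim v_2$, $u\not\sim x$, and $x\not\sim v_3$. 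Either way $G$ is not $P_5$-free, proving (ii).

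The computations here are routine; the only point demanding attention is the choice of path in (ii). Since every vertex of $P^2\cup P^4$ is adjacent to four of the five vertices of $C$, no induced $P_5$ lives inside $C\cup\{s,u\}$ alone, so the argument must route the path through the unique cycle-neighbour of $u$ that $x$ misses while ending on a cycle-neighbour of $x$. This is also why $P^3$ is absent from the statement: for $u\in P^3$ the non-neighbour of $u$ on $C$ is $v_1$, which \emph{is} adjacent to $x$, so the analogous path acquires a chord and $S$ need not be anticomplete to $P^3$.
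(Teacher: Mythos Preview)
Your proof is correct and follows essentially the same approach as the paper: part (i) is derived from Lemma~\ref{5cyclest}(g) via Remark~\ref{rem1}, and part (ii) uses exactly the same induced $P_5$'s, namely $s\,u\,v_4\,v_5\,x$ for $u\in P^2$ and $s\,u\,v_3\,v_2\,x$ for $u\in P^4$. Your additional remarks (the explicit identification $S=\{u\mid N_C(u)=\emptyset\}$, the self-contained witness for (i), and the explanation of why $P^3$ is excluded) are correct elaborations not present in the paper's terser proof.
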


\begin{proof}
It follows from Lemma~\ref{5cyclest}(g) and Remark~\ref{rem1} that {\it(i)} holds. Now let $s\in S, u\in P^{2}\cup P^{4}$ such that $su\in E(G)$.  If $u\in P^2$, then $suv_{4}v_{5}x=P_{5}$. If $u\in P^4$, then $suv_{3}v_{2}x=P_{5}$, which leads to a contradiction. Thus $S$ is anticomplete to $P^{2}\cup P^{4}$.
\end{proof}

\begin{coro}\label{SN}
$N_{V(G)\backslash S}(S)\subseteq Y_{2}\cup Y_{5}\cup Y_{1,x}\cup P^{3}\cup P^{3, x}\cup T$.
\end{coro}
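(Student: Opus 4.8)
The plan is to obtain the statement as an immediate consequence of Lemma~\ref{T5} and Lemma~\ref{T5S}. By Lemma~\ref{T5}, every vertex of $V(G)\backslash S$ belongs to one of the classes
$$R_1,\ R_3,\ R_4,\ \bar R_1,\ \bar R_3,\ Y_2,\ Y_5,\ P^2,\ P^3,\ P^4,\ T,\ R_{1,x},\ R_{2,x},\ R_{5,x},\ \bar R_{1,x},\ \bar R_{3,x},\ Y_{1,x},\ P^{3,x},$$
recalling that $x$ is an extra vertex attached to $C$, so $T_x=\emptyset$ and $\bar R_4\cup\bar R_{4,x}=\emptyset$, and that the vertices of $\Sigma$ themselves are distributed among these classes (e.g.\ $v_i\in R_{i,x}$ for $i\in\{1,2,5\}$, $v_3\in R_3$, $v_4\in R_4$, $x\in\bar R_1$). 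Thus it suffices to check, class by class, which of them a vertex of $S$ can be adjacent to.

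First I would apply Lemma~\ref{T5S}(i), which gives that $S$ is anticomplete to $R_1\cup R_3\cup R_4$, to $R_{1,x}\cup R_{2,x}\cup R_{5,x}$, and to $\bar R_1\cup\bar R_{1,x}\cup\bar R_3\cup\bar R_{3,x}$. Then Lemma~\ref{T5S}(ii) adds that $S$ is anticomplete to $P^2\cup P^4$. Deleting all of these classes from the list above, the only classes that survive are $Y_2$, $Y_5$, $Y_{1,x}$, $P^3$, $P^{3,x}$ and $T$. Hence any vertex of $V(G)\backslash S$ having a neighbour in $S$ must lie in $Y_2\cup Y_5\cup Y_{1,x}\cup P^3\cup P^{3,x}\cup T$, which is exactly the claimed inclusion.

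This is purely a bookkeeping argument, so I do not anticipate a genuine obstacle; the only point requiring a little care is to make sure the list of classes produced by Lemma~\ref{T5} really is exhaustive for $V(G)\backslash S$, so that no class potentially containing a neighbour of $S$ is dropped by accident.
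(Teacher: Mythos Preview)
Your proposal is correct and is exactly the (implicit) argument the paper intends: the corollary is stated without proof precisely because it is the bookkeeping consequence of Lemma~\ref{T5} and Lemma~\ref{T5S} that you spell out. Your enumeration of the classes in $V(G)\setminus S$ from Lemma~\ref{T5} is complete, and the eliminations via Lemma~\ref{T5S}(i) and (ii) leave precisely $Y_2\cup Y_5\cup Y_{1,x}\cup P^3\cup P^{3,x}\cup T$.
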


\begin{guess}\label{t5}
Let $G$ be a $(P_{5}, \textit{HVN})$-free graph. Suppose that $G$ contains an induced $T$-5-wheel $\Sigma$. Then $\chi(G)\leq \omega(G)+3$.
\end{guess}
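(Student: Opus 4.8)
The plan is to build an explicit $(\omega(G)+3)$-coloring of $G$ by first colouring a large, highly structured ``core'' that contains the $T$-5-wheel $\Sigma$ and all vertices attached to it, and then absorbing the set $S$ of vertices missed by $\Sigma$ using the homogeneity machinery of Lemma~\ref{homogeneous}. The key preliminary observation is that by Corollary~\ref{SN} the set $N_{V(G)\setminus S}(S)$ is confined to $Y_2\cup Y_5\cup Y_{1,x}\cup P^3\cup P^{3,x}\cup T$, and each of these sets is stable (Lemma~\ref{T5stable}(3)), moreover pairwise ``far apart'' on the wheel. So the first move is: since $\Sigma$ contains an induced paw, $G[S]$ is homogeneous in $G$ and $G[S]$ is $\omega(G)$-colorable; pick a vertex $u_0\in V(G)\setminus S$ complete to $S$ (it exists since $G$ is connected), and observe $u_0$ lies in one of the six stable sets above. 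The idea is to colour $G-S$ with $\omega(G)+3$ colours in such a way that the ``neighbourhood type'' of $u_0$ receives only a bounded number of colours, then recolour $G[S]$ with $\omega(G)$ colours disjoint from those; since $\omega(G[S])+ (\text{colours used on } N(S)) \le \omega(G)+3$ after a careful count, this closes.

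The main body is the colouring of $H:=G-S$. Here I would partition $V(H)$ according to Lemma~\ref{T5}: the ``$x$-side'' $N_G(x)=R_{1,x}\cup R_{2,x}\cup R_{5,x}\cup\bar R_{1,x}\cup\bar R_{3,x}\cup Y_{1,x}\cup P^{3,x}$, and the ``non-$x$ side'' $R_1\cup R_3\cup R_4\cup\bar R_1\cup\bar R_3\cup Y_2\cup Y_5\cup(\cup_{i=2,3,4}P^i)\cup T$. The structural lemmas give: $R_{2,x}\cup R_{5,x}$ is stable (Lemma~\ref{T5stable}(2)); one of $R_3,R_4$ is stable and the ``other side'' $R_3\cup\bar R_3\cup\bar R_{3,x}$ or $R_4$ is stable (Lemma~\ref{T5stable}(1)); each $Y$ and $P$ class and $T$ is stable (Lemma~\ref{T5stable}(3)); $\bar R_4\cup\bar R_{4,x}=\emptyset$ and $T_x=\emptyset$; and by Lemma~\ref{5cyclest}(e)--(f), consecutive $R$/$\bar R$ classes are complete to each other and at most one of $\bar R_i,\bar R_{i+1}$ is nonempty. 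I would group the vertex classes into ``bags'' each of which is either contained in the closed neighbourhood of one of the wheel vertices $v_1,\dots,v_5,x$ — hence $(P_5,K_3)$-free or paw-free inside that bag by HVN-freeness — and then apply Corollary~\ref{colorpawfree} (which gives an $(\omega+1)$-colouring of a $(P_5,\text{paw})$-free graph with two prescribed stable sets receiving single colours) to each bag, reusing colour classes across bags that are anticomplete to one another. The anticompleteness relations in Lemma~\ref{5cyclest}(b)--(d) and Lemma~\ref{T5stable}(4) are exactly what permits reusing the three ``extra'' colours $\{\omega+1,\omega+2,\omega+3\}$ globally; the bounded-degree structure of $N_G(x)$ around $x$ keeps the extra-colour count at $3$.

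Concretely the colouring scheme I would aim for: colours $1,\dots,\omega$ form a ``base palette'' used on the bulk of each bag (since each bag has clique number $\le\omega$), while $\omega+1,\omega+2,\omega+3$ are reserved for the stable ``outer'' sets $Y_2,Y_5,Y_{1,x},P^3,P^{3,x},T$ and for the at-most-one ``bad'' stable set among $R_3/R_4$ that cannot be folded into a paw-free bag. One must check (i) adjacent outer sets don't both want the same extra colour — handled because e.g. $Y_2$ is anticomplete to $Y_5$ and to $P^{3,x}$ by the $Y$--$P$ anticompleteness, $T$ is anticomplete to all $\bar R$, $Y$ classes, etc.; (ii) the base palette restricted to any single bag is a proper $\omega$-colouring with the right stable sets isolated, which is exactly Corollary~\ref{colorpawfree}; and (iii) the interface between the $x$-side and the non-$x$-side respects colours, which follows from the completeness relations forcing those interfaces to live inside a common $v_i$-neighbourhood again. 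Finally, for $S$: since $u_0$ is complete to $S$ and $u_0$ lies in some outer stable set coloured with a single extra colour $c\in\{\omega+1,\omega+2,\omega+3\}$, we have that no vertex of $S$ can use colour $c$, and in fact $S$ sees at most the $\omega$ base colours plus the extra colours of $Y_2,Y_5,Y_{1,x},P^3,P^{3,x},T$; recolouring $G[S]$ (which is $\omega(G)$-colorable) with a palette avoiding the at-most-three extra colours plus whatever base colours appear on $N(S)$ requires care, so the honest way is: give $G[S]$ its own $\omega(G)$ colours chosen from $\{1,\dots,\omega(G)+3\}\setminus\{$the $\le 3$ extra colours appearing on $N_{V(G)\setminus S}(S)\}$, which has size $\ge \omega(G)$, and check no conflict with the base colours on $N(S)$ — this last point is where homogeneity of $S$ is used, since then $N(S)$ is a union of bags already coloured, and $|N(S)\cap\text{(one bag)}|$ plus $\omega(G[S])$ stays within budget.

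The step I expect to be the main obstacle is item (iii) together with the colour-reuse bookkeeping: showing that the three extra colours genuinely suffice for \emph{all} of $Y_2\cup Y_5\cup Y_{1,x}\cup P^2\cup P^3\cup P^4\cup P^{3,x}\cup T$ plus the one leftover $R$ class, which requires a careful case analysis of which pairs of these classes can be mutually adjacent (several are forced adjacent via the completeness relations of Lemma~\ref{5cyclest}(e), e.g. $R_3\cup\bar R_3$ is complete to $R_4$), so that one cannot naively assign one colour per class. I would resolve it by proving a short auxiliary claim that among $\{Y_2,Y_5,Y_{1,x},P^2,P^3,P^4,P^{3,x},T\}$ the adjacency graph (``which classes are adjacent to which'') is $3$-colorable after merging each class into a single super-vertex, and then noting that the genuinely problematic $R$/$\bar R$ blocks either are stable (so foldable) or fit inside a $v_i$-neighbourhood with spare room; the completeness relations actually \emph{help} here, since a clique spanning several classes forces those classes to be small relative to $\omega$. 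The rest is an accounting argument that never exceeds $\omega(G)+3$.
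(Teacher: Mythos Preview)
Your high-level ingredients are right—use Lemma~\ref{homogeneous} for $S$, use Corollary~\ref{SN} to confine $N_{V(G)\setminus S}(S)$, and exploit that every neighbourhood of a wheel vertex is paw-free. But the specific decomposition you propose is inverted relative to the paper's, and that inversion creates a real gap.

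The paper does \emph{not} scatter the vertex classes across multiple bags. It takes a \emph{single} bag $A$ consisting of everything adjacent to one fixed wheel vertex (either $v_2$ or $v_5$, depending on which of $R_3,R_4$ is non-stable), so that $G[A]$ is paw-free and hence $\omega$-colorable in one shot by Corollary~\ref{colorpawfree}. Crucially, all of $Y_2,Y_5,P^2,P^3,P^4,P^{3,x},T$ sit \emph{inside} $A$, not outside. What remains outside $A$ is exactly three stable sets ($B$, $M$, $D$ in the paper's notation: one $R$-class, $Y_{1,x}\cup P^j$ for some $j$, and $R_{2,x}\cup R_{5,x}$), which take the three extra colours. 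The second key move—missing from your plan—is that Corollary~\ref{colorpawfree} lets you \emph{prescribe} two disjoint stable subsets of $A$ to receive two fixed colours $c_1,c_2$; the paper chooses these two subsets so that they cover all of $N_{V(G)\setminus S}(S)\cap A$. Then $S$ is coloured with $c_3,\dots,c_\omega,c_{\omega+1},c_{\omega+2}$, avoiding $c_1,c_2,c_{\omega+3}$.

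Your plan instead pushes $Y_2,Y_5,Y_{1,x},P^2,P^3,P^4,P^{3,x},T$ (plus a leftover $R$-class) onto the three extra colours and leaves the $R/\bar R$ classes as the ``base''. Two problems. First, that base is not contained in any single $N(v_i)$, so it is not paw-free, and you have no mechanism for $\omega$-colouring it; the $R$-classes sit in a $5$-cycle of completeness relations (Lemma~\ref{5cyclest}(e)), so the base genuinely contains induced $C_5$'s. ``Reusing colours across anticomplete bags'' does not apply, since consecutive bags are complete, not anticomplete. Second, the $3$-colouring of the extra classes together with the leftover $R$-class is not established: for instance $R_4$ can be adjacent to each of $Y_2$, $Y_5$, $T$, and $P^4$, so once $Y_2,Y_5$ and the $P\cup T\cup Y_{1,x}$ block occupy three colours, $R_4$ may need a fourth. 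You flag this as the ``main obstacle'' but the proposed resolution (clique-size forces classes small) does not bound chromatic number.

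In short: collapse to one bag $A=N(v_2)$ (or $N(v_5)$), put the $Y/P/T$ classes \emph{inside} $A$, and use the two-prescribed-stable-sets feature of Corollary~\ref{colorpawfree} to pin $N(S)\cap A$ to two colours. That is the missing idea.
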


\begin{proof}
It follows from Lemma~\ref{T5stable} that $R_{3}$ and $R_{4}$ cannot be both non-stable. Thus we are going to prove this theorem by making arguments on the following two cases.

{\it\bf Case 1:} Firstly, suppose that $R_{4}$ is not stable. Then it follows from Lemma~\ref{T5stable}(1) that $R_{3}\cup \bar{R}_{3}\cup \bar{R}_{3,x}$ is stable. Now let
\begin{align*}
A = & R_{1}\cup R_{1,x}\cup\bar{R}_{1}\cup\bar{R}_{1,x}\cup R_{4}\cup Y_{2}\cup Y_{5}\cup P^{3}\cup P^{3,x}\cup P^{4}\cup T,\\
B = & R_{3},\\
M = & \bar{R}_{3}\cup\bar{R}_{3,x}\cup Y_{1, x}\cup P^{2},\\
D = & R_{2,x}\cup R_{5,x}.
\end{align*}
Clearly we have that $V(G)\backslash S=A\cup B\cup M\cup D$ and $B$ is stable. By Remark~\ref{rem1}, Lemma~\ref{5cyclest}(c,d) and Lemma~\ref{T5stable}(2,3,4), we have that $M$ and $D$ are stable sets. Since all the vertices of $A$ is adjacent to   $v_{5}$ and $G$ is HVN-free, we have that $G[A]$ is paw-free. By Lemma~\ref{colorpawfree}, $\chi(G[A])\leq \omega(G[A])+1\leq (\omega(G)-1)+1=\omega(G)$. Thus $G[A]$ is $\omega(G)$-colorable, and let the colors be $\{c_{1}, c_2, \ldots, c_{\omega}\}$.

Let $Y_{5, 1}$ be the set of vertices in $Y_5$ which have neighbours in $Y_{2}\cup P^{4}\cup P^{3}\cup P^{3,x}\cup T$, and $Y_{5, 2}=Y_{5}\backslash Y_{5, 1}$. Notice that by Remark~\ref{rem1} and Lemma~\ref{5cyclest}(a, b, c), we have that $Y_{2}\cup P^{4}\cup P^{3}\cup P^{3,x}\cup T$ is stable, which implies that $(Y_{2}\cup P^{4}\cup P^{3}\cup P^{3,x}\cup T)\cup Y_{5,2}$ is stable as well.

By Corollary~\ref{colorpawfree},  we can always color $G[A]$ such that $(Y_{2}\cup P^{4}\cup P^{3}\cup P^{3,x}\cup T)\cup Y_{5,2}$ is colored by $c_1$, and $Y_{5, 1}$ is colored by $c_{2}$.  Let $c_{\omega+1}, c_{\omega+2}$ and $c_{\omega+3}$ be three new colors. Since $\Sigma$ contains an induced paw, by Lemma~\ref{homogeneous},  we have that $S$ is $\omega(G)$-colorable. It follows from Corollary~\ref{SN} that we can use $c_3, \ldots, c_{\omega}, c_{\omega+1}, c_{\omega+2}$ on $S$, use $c_{\omega+1}, c_{\omega+2}$ on $B,D$ respectively, and color $M$ by $c_{\omega+3}$. Hence $G$ is $(\omega(G)+3)$-colorable.

Now suppose that $R_3$ is not stable. Then $R_4$ is stable. Define
\begin{align*}
A = & R_{1}\cup R_{1,x}\cup\bar{R}_{1}\cup\bar{R}_{1, x}\cup R_{3}\cup \bar{R}_3\cup\bar{R}_{3,x}\cup Y_{2}\cup Y_{5}\cup P^{2}\cup P^{3}\cup P^{3,x}\cup T,\\
B = & R_{4},\\
M = & P^{4}\cup Y_{1, x},\\
D = & R_{2,x}\cup R_{5,x}.
\end{align*}
Observe that $G[A]$ is still $\omega(G)$-colorable as each vertex of $A$ is adjacent to   $v_2$. By Lemma~\ref{5cyclest}(c) and Lemma~\ref{T5stable}(2,3), $M$ and $D$ are stable sets. Then with the similar arguments we can show that $G$ is $(\omega(G)+3)$-colorable.

{\it\bf Case 2:} Suppose that $R_{3}, R_{4}$ are both stable (or both empty). Let
\begin{align*}
A = & R_{1}\cup R_{1,x}\cup\bar{R}_{1}\cup\bar{R}_{1,x}\cup R_{3}\cup\bar{R}_3\cup\bar{R}_{3,x}\cup Y_{2}\cup Y_{5}\cup P^{2}\cup P^{3}\cup P^{3,x}\cup T,\\
B = & R_{4},\\
M = & P^{4}\cup Y_{1, x},\\
D = & R_{2,x}\cup R_{5,x}.
\end{align*}
By Remark~\ref{rem1}, Lemma~\ref{5cyclest}(c) and Lemma~\ref{T5stable}(2,3), we have that $B$, $M$ and $D$ are all stable sets. Since all the vertices of $A$ are adjacent to   $v_{2}$ and $G$ is HVN-free, we have that $G[A]$ is paw-free, and so $G[A]$ is $\omega(G)$-colorable. Let $c_{1}, c_2, \ldots, c_{\omega}$ be the colors used on $G[A]$.

Let $Y_{2, 1}$ be the set of vertices of $Y_2$ which have neighbours in $Y_{5}\cup P^{2}\cup P^{3}\cup P^{3,x}\cup T$, and $Y_{2, 2}=Y_{2}\backslash Y_{2, 1}$. Notice that by Remark~\ref{rem1} and Lemma~\ref{5cyclest}(a, b, c), we have that $Y_{5}\cup P^{2}\cup P^{3}\cup P^{3,x}\cup T$ is stable, which implies that $(Y_{5}\cup P^{2}\cup P^{3}\cup P^{3,x}\cup T)\cup Y_{2,2}$ is stable as well.

By the same arguments of Case 1, we may conclude that $Y_{5}\cup P^{2}\cup P^{3}\cup P^{3,x}\cup T\cup Y_{2}$ can be colored by at most two colors, say $c_1$ and $c_{2}$.

Recall that $S$ is $\omega(G)$-colorable as $\Sigma$ contains a paw. It follows from Corollary~\ref{SN} that we can use $c_3, \ldots, c_{\omega}$ together with two new colors $c_{\omega+1}, c_{\omega+2}$ on $S$, and use $c_{\omega+1}, c_{\omega+2}$ on $B,D$, respectively. Hence $G$ is $(\omega(G)+3)$-colorable by coloring $M$ by an additional color $c_{\omega+3}$.
\end{proof}

\section{Y-5-Wheel}

Recall that a $Y$-5-wheel is a graph consisting of a chordless 5-cycle $C=v_{1}v_{2}v_{3}v_{4}v_{5}v_1$ and an additional vertex $x$  where $x$ is adjacent to   three non-consecutive vertices on $C$, say $v_1, v_2, v_4$. Let $G$ be a $(P_{5}, \textit{HVN}, T\textit{-5-wheel})$-free graph and $\Sigma$ be an induced $Y$-5-wheel of $G$. In this section, we will show that $G$ is $(\omega(G)+3)$-colorable.

For $1\leq i\leq 5$, let $R_i, \bar{R}_i, Y_i, P^i$ and $T$ be as defined in Lemma~\ref{5cycle}, and in this section, we still use these notations to denote the vertices adjacent to   $C$ but not adjacent to   $x$. Obviously $x\in Y_4$.  Now define $S=\{u | N_{\Sigma}(u)=\emptyset\}$,
\begin{itemize}
\item $R_{i,x}=\{u | N_{\Sigma}(u)=\{v_{i-1}, v_{i+1}, x\}\}$,
\item $\bar{R}_{i,x}=\{u | N_{\Sigma}(u)=\{v_{i-1}, v_{i}, v_{i+1},x\}\}$,
\item $Y_{i,x}=\{u | N_{\Sigma}(u)=\{v_{i-2}, v_{i}, v_{i+2}, x\}\}$,
\item $P^{i,x}=\{u | N_{\Sigma}(u)=\{v_{i-1}, v_{i}, v_{i+1}, v_{i+2}, x\}\}$, and
\item $T_x=\{u | N_{\Sigma}(u)=V(\Sigma)\}$.
\end{itemize}

\begin{rem}\label{rem2}
Since $x$ is an additional vertex to $C$, all results of Lemma~\ref{5cyclest} hold for $R_{i}, \bar{R}_{i}, Y_{i}, P^{i}, T$ defined in Lemma~\ref{5cycle} and $R_{i,x}, \bar{R}_{i,x}, Y_{i,x}, P^{i,x}, S$ defined in this section.
\end{rem}

\begin{llemma}\label{Y5}
Let $G$ be a $(P_{5}, \textit{HVN})$-free graph. Suppose that $G$ contains an induced $Y$-5-wheel $\Sigma$ and contains no induced $T$-5-wheel. Then
\begin{equation*}
V(G)= X\bigcup\left[\cup_{i\neq 4}(R_{i}\cup Y_{i,x})\right]\bigcup \left[\cup_{1\leq i\leq 5}(P^{i}\cup R_{i,x})\right]\bigcup (\cup_{i=1,2,4}Y_{i})\bigcup P^{3,x}\bigcup P^{4,x}\bigcup T\bigcup S
\end{equation*}
where $X=\{u | N_{\Sigma}(u)=\{x\}\}$.
\end{llemma}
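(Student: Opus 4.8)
The plan is to mimic the proof of Lemma~\ref{T5}. Write $C=v_{1}v_{2}v_{3}v_{4}v_{5}v_{1}$ for the chordless $5$-cycle of $\Sigma$ and $x$ for its hub, so that $N_{C}(x)=\{v_{1},v_{2},v_{4}\}$ and $x\in Y_{4}$. First I would record the locations of the vertices of $\Sigma$ itself: $v_{1}\in R_{1,x}$, $v_{2}\in R_{2,x}$, $v_{3}\in R_{3}$, $v_{4}\in R_{4,x}$, $v_{5}\in R_{5}$, and $x\in Y_{4}$. Then, for an arbitrary $u\in V(G)\backslash V(\Sigma)$, Lemma~\ref{5cycle} applied to $C$ says that $N_{C}(u)$ is either empty, or of the form $\{v_{i-1},v_{i+1}\}$, $\{v_{i-1},v_{i},v_{i+1}\}$, $\{v_{i-2},v_{i},v_{i+2}\}$, $\{v_{i-1},v_{i},v_{i+1},v_{i+2}\}$, or all of $V(C)$; distinguishing in each case whether or not $u$ is adjacent to $x$ places $u$ into one of $S$, $X$, $R_{i}$, $R_{i,x}$, $\bar{R}_{i}$, $\bar{R}_{i,x}$, $Y_{i}$, $Y_{i,x}$, $P^{i}$, $P^{i,x}$, $T$, $T_{x}$. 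Consequently the lemma is equivalent to the statement that the sets not appearing on the right-hand side are empty, namely $\bar{R}_{i}$ and $\bar{R}_{i,x}$ for every $i$, together with $T_{x}$, $R_{4}$, $Y_{3}$, $Y_{5}$, $Y_{4,x}$, $P^{1,x}$, $P^{2,x}$ and $P^{5,x}$.

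Most of these emptiness assertions are short. If $u\in\bar{R}_{i}\cup\bar{R}_{i,x}$, then $N_{C}(u)$ consists of three consecutive vertices of the chordless $5$-cycle $C$, so $C\cup\{u\}$ induces a $T$-$5$-wheel, contrary to hypothesis; this use of $T$-$5$-wheel-freeness is the only new ingredient compared with Section~3. If $u\in T_{x}$, then $\{u,x,v_{1},v_{2}\}$ is a $K_{4}$ and $v_{4}$ is adjacent to exactly $u$ and $x$, an HVN (equivalently, $x$ lies in the Lemma~\ref{5cycle} set $Y_{4}$, so by Remark~\ref{rem2} and Lemma~\ref{5cyclest}(b) no vertex complete to $V(C)$ can be adjacent to $x$). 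If $u\in Y_{4,x}$, then $N_{C}(u)=\{v_{1},v_{2},v_{4}\}=N_{C}(x)$ and $ux\in E(G)$, so again $\{u,x,v_{1},v_{2}\}$ is a $K_{4}$ with $v_{4}$ adjacent to exactly two of its vertices, an HVN. Finally, if $u\in P^{i,x}$ with $i\in\{1,2,5\}$, then in every case $\{u,x,v_{1},v_{2}\}$ is a $K_{4}$ while a further vertex of $C$ --- namely $v_{3}$ if $i=1$, and $v_{4}$ if $i\in\{2,5\}$ --- is adjacent to exactly two of its vertices, yielding an HVN.

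The remaining sets $R_{4}$, $Y_{3}$, $Y_{5}$ are killed by exhibiting an induced $P_{5}$ that exploits $u\not\sim x$. If $u\in R_{4}$, so $u\sim v_{3},v_{5}$ and $u\not\sim v_{1},v_{2},v_{4},x$, then $x\,v_{1}\,v_{5}\,u\,v_{3}$ is an induced $P_{5}$; the same path works when $u\in Y_{5}$ (where $u\sim v_{2},v_{3},v_{5}$ and $u\not\sim v_{1},v_{4},x$), and $x\,v_{2}\,v_{3}\,u\,v_{5}$ is an induced $P_{5}$ when $u\in Y_{3}$ (where $u\sim v_{1},v_{3},v_{5}$ and $u\not\sim v_{2},v_{4},x$). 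In each instance the absence of the four non-consecutive chords comes down to $u\not\sim x$ and the chordlessness of $C$, so these arguments leave the neighbouring sets $R_{4,x}$, $Y_{3,x}$, $Y_{5,x}$ untouched, exactly as they should.

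I do not anticipate a real obstacle: once the partition from Lemma~\ref{5cycle} is in place the proof is a finite, mechanical check. The two points that need attention are invoking $T$-$5$-wheel-freeness for the $\bar{R}$-type sets (the feature that distinguishes this lemma from Lemma~\ref{T5}), and selecting the induced $P_{5}$'s for $R_{4}$, $Y_{3}$, $Y_{5}$ so that they avoid chords through the high-degree vertices $x,v_{1},\dots,v_{5}$ while genuinely depending on $u\not\sim x$, so that the many companion sets $R_{i,x}$, $Y_{1,x}$, $Y_{2,x}$, $Y_{3,x}$, $Y_{5,x}$, $P^{3,x}$, $P^{4,x}$ survive the analysis.
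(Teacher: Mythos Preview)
Your proposal is correct and follows essentially the same approach as the paper's proof: both reduce the lemma to showing the ``missing'' sets $\bar R_i,\bar R_{i,x},R_4,Y_3,Y_5,Y_{4,x},P^{1,x},P^{2,x},P^{5,x},T_x$ are empty, invoke $T$-5-wheel-freeness for the $\bar R$-type sets, and exhibit the same induced $P_5$'s (up to relabelling) for $R_4,Y_3,Y_5$. The only cosmetic difference is that for $P^{1,x},P^{2,x},P^{5,x}$ and $T_x$ you build the HVN's by hand, whereas the paper cites Lemma~\ref{5cyclest}(b),(c) via Remark~\ref{rem2} (using $x\in Y_4$); since those lemma items are themselves proved by the same HVN constructions, the arguments are identical in substance.
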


Notice that Lemma~\ref{Y5} implies that $N_{G}(x)=X\bigcup (\cup_{1\leq i\leq 5}R_{i,x})\bigcup (\cup_{i\neq 4}Y_{i,x})\bigcup P^{3,x}\bigcup P^{4,x}$.

\begin{proof}
Let $u\in V(G)$. First suppose that $u=x\in V(\Sigma)$, then $u\in Y_4$. If $u=v_i$ with $i=3, 5$, then $u\in R_3\cup R_5$. If $i=1, 2$ or $i=4$, then $u\in R_{i,x}$.

Now suppose that $u\in V(G)\backslash V(\Sigma)$. Since $G$ contains no induced $T$-5-wheel, we have that $N_{C}(u)\neq\{v_{i-1}, v_i,v_{i+1}\}$.

{\it \bf Case 1: $u$ is not adjacent to   $C$.} If $u$ is adjacent to  $x$, then $u\in X$. Otherwise, $u\in S$.

{\it\bf Case 2: $N_{C}(u)=\{v_{i-1}, v_{i+1}\}$.} If $i=4$, then $u\in R_{4,x}$, otherwise $v_5uv_3v_2x=P_5$. For $i\neq 4$, we may observe that $u\in R_i\cup R_{i,x}$.

{\it\bf Case 3: $N_{C}(u)=\{v_{i-2}, v_i, v_{i+2}\}$.} When $i=1, 2$, then $u\in Y_i$ if $ux\notin E(G)$, and $u\in Y_{i, x}$ if $ux\in E(G)$. Suppose that $i=3$. If $ux\notin E(G)$,  then $v_{5}uv_3v_2x=P_{5}$. This implies a contradiction, and so $u\in Y_{3,x}$. Similarly, we may show that $u\in Y_{5,x}$ when $i=5$. Now suppose that $i=4$. Notice that if $ux\in E(G)$, then $\{v_1,u,v_2,x,v_4\}$ induces HVN, which leads to a contradiction. Thus $u\in Y_4$ when $i=4$.

{\it\bf Case 4: $N_{C}(u)=\{v_{i-1}, v_i, v_{i+1},v_{i+2}\}$.} Since $x\in Y_4$, It is easy to observe that by Lemma~\ref{5cyclest}(c), $u\in P^{i}\cup P^{i,x}$ for $i=3$ or $4$, and $u\in P^i$ for $i=1,2,5$.

{\it\bf Case 5: $N_{C}(u)=V(C)$.} Since $x\in Y_4$, $T_x=\emptyset$ by Lemma~\ref{5cyclest}(b), and so $u\in T$.
\end{proof}

\begin{llemma}\label{Y5stable}
Let $G$ be a $(P_{5}, \textit{HVN})$-free graph. Suppose that $G$ contains an induced $Y$-5-wheel $\Sigma$ and contains no induced $T$-5-wheel. Then
\begin{itemize}
\item[](1). $R_{i}$ ($i\neq 4$) and $P^i$ ($1\leq i\leq 5$) are stable, and $Y_{i}$ is stable ($i=1,2,4$).
\item[](2a). If $R_{1}$ is not empty, then $R_{2}$  is empty.
\item[](2b). $R_1$ is anticomplete to $R_3$.
\item[](2c). $R_2$ is anticomplete to $R_5$.
\item[](3). $Y_{2}$ is anticomplete to $R_1\cup R_3$, $Y_1$ is anticomplete to $R_2\cup R_5$.
\item[](4). $P^3$ is anticomplete to $R_1\cup R_3$, $P^4$ is anticomplete to $R_2\cup R_5$.
\end{itemize}
\end{llemma}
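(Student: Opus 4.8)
The plan is to prove each item by a short case analysis, in every case producing either an induced $P_5$ or an induced HVN from the forbidden subgraph $\Sigma$ together with the hypothetical edges, exactly as in the proof of Lemma~\ref{5cyclest}. Recall throughout that $C=v_1v_2v_3v_4v_5v_1$ is the chordless $5$-cycle of $\Sigma$, that $x\in Y_4$ (so $x$ is adjacent to $v_2,v_4,v_1$... more precisely $N_C(x)=\{v_{4-2},v_4,v_{4+2}\}=\{v_2,v_4,v_1\}$), and that all conclusions of Lemma~\ref{5cyclest} are available by Remark~\ref{rem2}.

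For (1), stability of $R_i$ ($i\neq 4$), of $P^i$, and of $Y_i$ ($i=1,2,4$) all come directly from Lemma~\ref{5cyclest}(a) applied to the cycle $C$ (these sets are among the $R_i$, $P^i$, $Y_i$ of Lemma~\ref{5cycle}); no new argument involving $x$ is needed, so (1) is essentially a citation. For (2a): if $u\in R_1$ and $w\in R_2$ with both sets nonempty, Lemma~\ref{5cyclest}(e) forces $R_1$ complete to $R_2$ (adjusting indices: $R_1\cup\bar R_1$ complete to $R_2\cup\bar R_2$), so $uw\in E(G)$; but then $u$ is adjacent to $v_2,v_5$ and $w$ is adjacent to $v_1,v_3$, and I expect $\{u,w,v_1,v_2,\dots\}$ to contain an HVN or that $uwv_3v_4v_5$ or similar is an induced $P_5$ — the precise offending set is $\{v_1,v_2,u,w,?\}$ and I will check which fifth vertex completes an HVN (likely using that $x$ is adjacent to $v_1,v_2$ but to neither $u$ nor $w$ since $u\notin N(x)$ by $u\in R_1$ and $w\notin N(x)$ by $w\in R_2$, giving an induced $P_5$ $uxw\cdots$ is not right since $ux\notin E$; rather $v_3wuv_5$-type path extended). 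For (2b), (2c), (3), (4): these are ``anticomplete'' assertions between a set of vertices missing a consecutive arc of $C$ and a set $R_j$ two steps away; the template is that an edge between them creates a $P_5$ running out along $C$ on the side where neither endpoint has a neighbour, or an HVN when their $C$-neighbourhoods overlap in two vertices. For instance for (2b), $u\in R_1$ ($N_C=\{v_2,v_5\}$) and $w\in R_3$ ($N_C=\{v_2,v_4\}$): if $uw\in E(G)$ then $u,w$ are both adjacent to $v_2$, and $\{v_2,u,w,?,?\}$ — here $v_5\in N(u)\setminus N(w)$ and $v_4\in N(w)\setminus N(u)$ with $v_4v_5\in E(G)$, so $v_5uwv_4$ plus... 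I will look for the induced $P_5$ $v_5\,u\,w\,v_4$ extended by a vertex adjacent to $v_4$ only, or conclude directly that $\{v_2,u,w\}\cup\{v_1,v_3\}$ — since $v_1\in N(u)$? no, $v_1\notin N_C(u)=\{v_2,v_5\}$. The clean statement is probably: $v_1 u w v_4 v_5$... I will settle the exact witness in each of (2b),(2c),(3),(4) by the same bookkeeping, noting that (2c), and the two halves of (3) and of (4), follow from (2b) and its first halves by the symmetry $v_i\mapsto v_{-i}$ of $C$ that fixes $x$ (since $x\in Y_4$ and $Y_4$ is symmetric about $v_4$, swapping $v_1\leftrightarrow v_2$? — I need to identify the automorphism of $\Sigma$ that exchanges the relevant indices, namely reflection through the axis $v_4$–midpoint$(v_1v_2)$... wait $x$ is adjacent to $v_1,v_2,v_4$, so the symmetry is the reflection swapping $v_1\leftrightarrow v_2$ and $v_3\leftrightarrow v_5$ and fixing $v_4,x$, which is exactly what interchanges the two halves of (3) and (4) and sends (2b) to (2c)).

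The main obstacle will be getting the index arithmetic right in (2a): unlike the pure ``$P_5$ along $C$'' arguments, there the key is that $x$ is present and adjacent to both $v_1$ and $v_2$ while being anticomplete to $R_1\cup R_2$, so an edge $uw$ with $u\in R_1,w\in R_2$ yields the $5$-vertex set $\{x,v_1,v_2,u,w\}$ on which $x\sim v_1$, $x\sim v_2$, $v_1\sim v_2$, $u\sim v_2$, $u\sim w$, $w\sim v_1$ while $x\not\sim u$, $x\not\sim w$, $u\not\sim v_1$, $w\not\sim v_2$: this is a $C_5$, not an HVN, so (2a) must instead exploit a $T$-$5$-wheel or a different triangle — I will need either to find that $\{v_1,v_2,u,w\}$ already spans a $K_4$ (it does not) or to use an extra cycle vertex; most likely the correct witness is that $u,v_2,w,v_1$ together with the edge $v_1v_2$ forms a $K_4$ only if $uw\in E$ and then $v_5$ (adjacent to $v_1$ and $u$ but not $v_2$, not $w$) makes $\{v_1,v_2,u,w,v_5\}$... still not HVN since $v_5\sim v_1,u$ — two vertices of the $K_4$ — so $\{u,v_1,v_2,w\}+v_5$ is an HVN provided $\{u,v_1,v_2,w\}$ is a $K_4$, i.e.\ provided $uv_1,wv_2\in E(G)$, which is false. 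Hence the real proof of (2a) is presumably: if $R_1,R_2$ both nonempty then pick $u\in R_1$, $w\in R_2$; if $uw\notin E(G)$ then Lemma~\ref{5cyclest}(e) is contradicted, and if $uw\in E(G)$ then $u$ is adjacent to $v_5$, $w$ is adjacent to $v_3$, giving the induced path $v_3wuv_5$ which extends via $v_4$ to the $5$-cycle $v_3wuv_5v_4$ — a \emph{different} chordless $5$-cycle — and now $x\sim v_1\sim u$, $x\sim v_2\sim w$: I will chase this to produce a $T$-$5$-wheel or a $Y$-$5$-wheel or an HVN on at most five of $\{v_1,v_2,v_3,v_4,v_5,u,w,x\}$, and that identification is the one delicate point; everything else is a mechanical repetition of the Lemma~\ref{5cyclest} technique.
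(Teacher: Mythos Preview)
There is a concrete error in your plan for (1). You claim that stability of $R_i$ ($i\neq 4$) ``comes directly from Lemma~\ref{5cyclest}(a)'', but Lemma~\ref{5cyclest}(a) says only that $T\cup(\bigcup_i P^i)$ and each $Y_i$ are stable; it says nothing about $R_i$. In fact $R_i$ need not be stable in a general $(P_5,\textit{HVN})$-free graph (and indeed in Section~3 the sets $R_3,R_4$ may fail to be stable). The paper's proof of (1) uses the extra hypothesis that $G$ is $T$-$5$-wheel-free: if $u,v\in R_i$ with $uv\in E(G)$, then $\{u,v,v_{i-1},v_{i-2},v_{i+2},v_{i+1}\}$ induces a $T$-$5$-wheel (the chordless $5$-cycle is $u\,v_{i-1}\,v_{i-2}\,v_{i+2}\,v_{i+1}\,u$ and $v$ is adjacent to the three consecutive vertices $v_{i+1},u,v_{i-1}$). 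So (1) is \emph{not} a pure citation; you must invoke the $T$-$5$-wheel-free hypothesis here.

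For (2a) you are over-thinking the witness. With $u\in R_1$, $w\in R_2$ and $uw\in E(G)$ forced by Lemma~\ref{5cyclest}(e), the path $w\,u\,v_2\,x\,v_4$ is an induced $P_5$: the only adjacencies among these five vertices are $wu,\,uv_2,\,v_2x,\,xv_4$ (check that $w\not\sim v_2,x,v_4$; $u\not\sim x,v_4$; $v_2\not\sim v_4$). No HVN, no $T$-$5$-wheel, no auxiliary cycle is needed. Similarly, every anticompleteness in (2b)--(4) is witnessed by a single $P_5$ that threads through $x$; for example (2b) uses $x\,v_1\,v_5\,u\,v$ with $u\in R_1$, $v\in R_3$. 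Your symmetry observation (the reflection of $\Sigma$ swapping $v_1\leftrightarrow v_2$, $v_3\leftrightarrow v_5$, and fixing $v_4,x$) is correct and does reduce (2c) and the second halves of (3),(4) to (2b) and the first halves, matching the paper's pairwise-symmetric arguments.
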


\begin{proof}
{\it (1).} Let $u, v\in R_i$ for some $i\neq 4$ such that $uv\in E(G)$. Then $\{u, v, v_{i-1}, v_{i-2}, v_{i+2}, v_{i+1}\}$ induces a $T$-5-wheel, which leads to a contradiction as $G$ contains no induced $T$-5-wheel. Thus $R_{i}$ is stable for all $i\neq 4$. It follows from Lemma~\ref{5cyclest}(a) that $P^i$ ($1\leq i\leq 5$) is stable and $Y_{i}$ is stable ($i=1,2,4$). Thus this proves (1).

{\it (2a).} Let $u\in R_1$. Suppose to the contrary that $R_{2}$ is not empty, and let $v\in R_{2}$. By Lemma~\ref{5cyclest}(e), we have that $uv\in E(G)$. Then $vuv_2xv_4=P_5$, which leads to a contradiction. Thus $R_2=\emptyset$.

{\it (2b).} Let $u\in R_1$ and $v\in R_3$. Suppose that $uv\in E(G)$. Then $xv_1v_5uv=P_5$, which implies a contradiction.

{\it (2c).} Let $u\in R_{2}$ and $v\in R_5$, and suppose that $uv\in E(G)$. Then $uvv_4xv_2=P_5$, which leads to a contradiction. Thus $R_2$ is anticomplete to $R_5$.

{\it (3).} Let $u\in Y_2$ and $v\in R_i$ for $i=1,3$. First suppose that $i=1$. If $uv\in E(G)$, then $vuv_4xv_1=P_5$, which leads to a contradiction. Now suppose that $i=3$. If $uv\in E(G)$, then $vuv_5v_1x=P_5$, a contradiction.

Let $u\in Y_1$ and $v\in R_2\cup R_5$. Suppose that $v\in R_2$. Then $uv\notin E(G)$, otherwise $vuv_4xv_2=P_5$. Now suppose that $v\in R_5$. Then $uv\notin E(G)$, otherwise $vuv_3v_2x=P_5$. Hence this completes the proof of (3).

{\it (4).} Let $u\in P^3$ and $v\in R_1\cup R_3$. Suppose that $v\in R_1$. Then  $uv\notin E(G)$, otherwise $vuv_4xv_1=P_5$. Suppose that $v\in R_3$. If $uv\in E(G)$, then $vuv_5v_1x=P_5$, which leads to a contradiction. Thus $P^3$ is anticomplete to $R_1\cup R_3$.

Finally, let $u\in P^4$ and $v\in R_2\cup R_5$. Suppose that $v\in R_2$. If $uv\in E(G)$, then $vuv_4xv_2=P_5$, which leads to a contradiction. Thus we may assume that $v\in R_5$. Then $uv\notin E(G)$, otherwise $vuv_3v_2x=P_5$. Thus $P^4$ is anticomplete to $R_2\cup R_5$.
\end{proof}

\begin{llemma}\label{Y5S}
Let $G$ be a $(P_{5}, \textit{HVN})$-free graph. Suppose that $G$ contains an induced $Y$-5-wheel $\Sigma$ and contains no induced $T$-5-wheel. Then
\begin{itemize}
\item[](i). $N_{V(G)\backslash S}(S)\subseteq Y_4\bigcup (\cup_{i\neq 4} Y_{i,x})\bigcup P^2\bigcup P^5\bigcup P^{3,x}\bigcup P^{4,x}\bigcup T$.
\item[](ii). Suppose that $|S|\geq 2$ and $G[S]$ is connected. Then
\begin{itemize}
\item[]a. each vertex of $Y_4\bigcup (\cup_{i\neq 4}Y_{i,x})$ is either complete or anticomplete to $S$, and
\item[]b. $S$ is adjacent to  at most one of $Y_4$, $Y_{1, x}, Y_{2,x}, Y_{3,x}$ and $Y_{5,x}$.
\end{itemize}
\end{itemize}
\end{llemma}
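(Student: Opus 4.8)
I would prove the three assertions separately; (i) and (ii)a are short, and essentially all of the work is in (ii)b. For (i), note first that by Remark~\ref{rem2} together with Lemma~\ref{5cyclest}(g), $S$ is anticomplete to $R_i\cup\bar{R}_i$ and to $R_{i,x}\cup\bar{R}_{i,x}$ for every $i$, so in view of the decomposition of Lemma~\ref{Y5} only the sets $X$, $P^1$, $P^3$, $P^4$, $Y_1$ and $Y_2$ remain to be handled. For each of these I would assume $su\in E(G)$ with $s\in S$ and exhibit an induced $P_5$ whose first edge is $su$ and whose remaining three vertices lie on $C\cup\{x\}$; this is possible because $s$ is anticomplete to $V(\Sigma)$, because $x$ satisfies $N_C(x)=\{v_1,v_2,v_4\}$ (as $x\in Y_4$), and because a vertex of $P^i$ misses exactly one vertex of $C$ while a vertex of $X$, $Y_1$ or $Y_2$ is non-adjacent to $x$ together with two conveniently placed vertices of $C$. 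Concretely, one can take $suxv_1v_5$ when $u\in X$, $suv_1xv_4$ when $u\in P^1$, $suv_5v_1x$ when $u\in P^3\cup Y_2$, and $suv_3v_2x$ when $u\in P^4\cup Y_1$; checking that no chord is present in each case is routine.

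For (ii)a, I would use that the $Y$-5-wheel $\Sigma$ contains the induced paw on $\{x,v_1,v_2,v_4\}$ (with apex $v_4$), so that Lemma~\ref{homogeneous} applies with $H=\Sigma$ and gives that $S$ is homogeneous in $G$; in particular every vertex of $Y_4\cup(\cup_{i\neq 4}Y_{i,x})$, lying outside $S$, is complete or anticomplete to $S$. For (ii)b, suppose $S$ is adjacent to two of the five listed sets. By (ii)a there are vertices $a,b$ lying in two distinct such sets and each complete to $S$; since $|S|\geq 2$ and $G[S]$ is connected, fix an edge $s_1s_2$ of $G[S]$. If $ab\in E(G)$, then $\{a,b,s_1,s_2\}$ is a $K_4$, and since $N_C(a)$ and $N_C(b)$ are $3$-subsets of the $5$-set $V(C)$ they share a vertex $w$; as $w\in V(C)$ it is adjacent to $a$ and $b$ but to neither $s_1$ nor $s_2$, so $\{a,b,s_1,s_2,w\}$ is an induced HVN, a contradiction.

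It remains to treat $ab\notin E(G)$. Here $N_\Sigma(a)$ and $N_\Sigma(b)$ are incomparable: they are distinct (the two $Y$-sets are distinct); two $Y_{i,x}$-sets have distinct $3$-element traces on $C$, hence incomparable $N_\Sigma$; and $N_\Sigma(Y_4)=\{v_1,v_2,v_4\}$ cannot be contained in an $N_\Sigma(Y_{i,x})$, since that would force the $Y_{i,x}$-trace on $C$ to equal $\{v_1,v_2,v_4\}$, impossible because such a vertex adjacent to $x$ yields an HVN on $\{v_1,v_2,v_4,x\}$ (cf.\ Case~3 of Lemma~\ref{Y5}). So I may choose $u\in N_\Sigma(a)\setminus N_\Sigma(b)$ and $z\in N_\Sigma(b)\setminus N_\Sigma(a)$. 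In the five vertices $u,a,s_1,b,z$ all of $ua,as_1,s_1b,bz$ are edges and all of $us_1,ab,s_1z,ub,za$ are non-edges (the last two because $u\notin N_\Sigma(b)$ and $z\notin N_\Sigma(a)$), so the only pair whose adjacency is not forced is $uz$. If $uz\notin E(G)$, then $uas_1bz$ is an induced $P_5$. If $uz\in E(G)$, then $\{u,a,s_1,b,z\}$ induces a chordless $5$-cycle and $s_2$ is adjacent to exactly the three consecutive vertices $a,s_1,b$ of it (it misses $u,z\in V(\Sigma)$), so $\{u,a,s_1,b,z,s_2\}$ is an induced $T$-5-wheel. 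Both conclusions contradict the hypotheses on $G$, proving (ii)b.

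The crux is this last case: the single undetermined edge $uz$ produces a forbidden configuration whichever way it falls — an induced $P_5$ if it is absent, and a $T$-5-wheel on the $5$-cycle $uas_1bz$ with hub $s_2$ if it is present — so no case analysis over the particular pair of $Y$-sets is needed. The minor points that require care are the incomparability of $N_\Sigma(a)$ and $N_\Sigma(b)$ (needed so that $u$ and $z$ exist at all) and, for (ii)a, that an induced paw genuinely sits inside every $Y$-5-wheel; both are quick once the relevant traces on $C$ are written out.
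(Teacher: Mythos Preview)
Your proof is correct, and parts (ii)a and (ii)b take a genuinely different, more streamlined route than the paper. Part (i) is essentially identical to the paper's argument (the same $P_5$'s, up to harmless alternative choices). For (ii)a, the paper argues directly: for each edge $s_1s_2$ of $G[S]$ and each $u$ in one of the $Y$-sets, it exhibits an explicit $P_5$ of the form $s_2s_1u\cdots$ whenever $u$ sees $s_1$ but not $s_2$. You instead invoke Lemma~\ref{homogeneous} with $H=\Sigma$ (which contains the paw $\{x,v_1,v_2,v_4\}$); this immediately gives that \emph{every} vertex outside $S$ is complete or anticomplete to $S$, which is both shorter and strictly stronger than what (ii)a asserts. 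For (ii)b, the paper runs a case analysis over the possible pairs among $\{Y_4,Y_{1,x},Y_{2,x},Y_{3,x},Y_{5,x}\}$, in each case naming a specific fifth vertex to complete an HVN (when $ab\in E(G)$) or a specific $T$-5-wheel (when $ab\notin E(G)$). Your argument replaces this with two uniform observations: first, any two $3$-element traces on the $5$-cycle must intersect, so a common neighbour $w\in V(C)$ exists and $\{a,b,s_1,s_2,w\}$ is an HVN whenever $ab\in E(G)$; second, when $ab\notin E(G)$, incomparability of $N_\Sigma(a)$ and $N_\Sigma(b)$ supplies $u,z$ so that the single undetermined pair $uz$ forces either an induced $P_5$ or a $T$-5-wheel with hub $s_2$. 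One point worth making explicit (it is implicit in your setup): since $a$ and $b$ are complete to $S$ while every vertex of $\Sigma$ is anticomplete to $S$, we have $a,b\notin V(\Sigma)$, so the six vertices $u,a,s_1,b,z,s_2$ are automatically distinct. The paper's case-by-case approach has the advantage of being entirely self-contained and concrete; yours is shorter, covers all pairs at once, and makes transparent why no pair can survive.
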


\begin{proof}
Recall that by Lemma~\ref{Y5}, we have that
\begin{equation*}
V(G)\backslash S= X\bigcup\left[\cup_{i\neq 4}(R_{i}\cup Y_{i,x})\right]\bigcup \left[\cup_{1\leq i\leq 5}(P^{i}\cup R_{i,x})\right]\bigcup (\cup_{i=1,2,4}Y_{i})\bigcup P^{3,x}\bigcup P^{4,x}\bigcup T.
\end{equation*}

{\it (i):} Let $s\in S$ and $u\in N_{V(G)\backslash S}(s)$. It follows from Lemma~\ref{5cyclest}(g) and Remark~\ref{rem2} that $u\notin (\cup_{i\neq 4}R_{i})\bigcup(\cup_{1\leq i\leq 5}R_{i,x})$. Notice that $u\notin Y_1\cup Y_2$, otherwise either $suv_4xv_2=P_5$ ($u\in Y_1$) or $suv_4xv_1=P_5$ ($u\in Y_2$).

Now suppose that $u\in P^{i}$ for $i\neq 2,5$. If $i=1$, then $suv_1xv_4=P_5$. If $i=3$, then $suv_4xv_1=P_5$. If $i=4$, then $suv_4xv_2=P_5$. Thus $u\notin P^1\cup P^3\cup P^4$.

Recall that $X=\{z | N_{\Sigma}(z)=x\}$. Suppose that $u\in X$. Then $suxv_1v_5=P_5$, which leads to a contradiction. Hence this completes the proof of (i).

{\it (ii):} Let $s_1s_2\in E(G[S])$ and $u\in Y_4$. Suppose that $us_1\in E(G)$ and $us_2\notin E(G)$. Then $s_2s_1uv_2x=P_5$, a contradiction. Thus $u$ is either complete or anticomplete to $S$. Now let $u\in Y_{i,x}$ for some $i\neq 4$. If $s_1u\in E(G)$ and $s_2u\notin E(G)$, then $s_2s_1uv_{i-2}v_{i-1}=P_5$, which leads to a contradiction. Thus this proves (ii.a).

Now suppose to the contrary that $S$ is adjacent to  two of $Y_4$ and $Y_{i, x}$ ($i\neq 4$). First suppose that $S$ is adjacent to   $Y_4$ and $ Y_{i,x}$ for some $i\neq 4$. By ({\it ii.a}), let $u\in Y_4$ be a vertex complete to $S$, and $v\in Y_{i,x}$ be a vertex complete to $S$. Then $uv\notin E(G)$. Otherwise, either $\{u,v,s_1,s_2,v_i\}$ induces an HVN ($i=1,2$), or $\{u, v,s_1,s_2,v_{i+2}\}$ induces an HVN ($i=5$), or $\{u,v,s_1,s_2, v_{i-2}\}$ induces an HVN ($i=3$).
This implies that either $\{s_1,u,v_{i-1},v_{i-2},v,s_2\}$ ($i=2,5$) or $\{s_1,u,v_{i+1},v_{i+2},v,s_2\}$ ($i=1,3$) induces a $T$-5-wheel, which leads to a contradiction.

Thus we may assume that $S$ is adjacent to  $Y_{i,x}$ and $Y_{j,x}$ for $i\neq j$. Let $u\in Y_{i,x}$ be a vertex complete to $S$, and $v\in Y_{j,x}$ be a vertex complete to $S$. Then $uv\notin E(G)$, otherwise either $\{u,v,s_1,s_2,v_{i-2}\}$  (when $j=i+1$) or $\{u,v,s_1,s_2,v_{i+2}\}$ (when $j=i+2$) induces an HVN. If $uv\notin E(G)$, then either $\{s_1,u,v_i,v_{i-1}, v,s_2\}$ ($j=i+1$) or $\{s_1,u,v_{i-2}, v_{i-1}, v,s_2\}$ ($j=i+2$) induces a $T$-5-wheel, which implies a contradiction. Hence $S$ is adjacent to  at most one of $Y_4$ and $Y_{i, x}$ ($i\neq 4$), which completes the proof of ({\it ii.b}).
\end{proof}

\begin{guess}\label{y5}
Let $G$ be a $(P_{5}, \textit{HVN})$-free graph. Suppose that $G$ contains an induced $Y$-5-wheel $\Sigma$ and contains no induced $T$-5-wheel. Then $G$ is $(\omega(G)+3)$-colorable.
\end{guess}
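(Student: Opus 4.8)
The plan is to follow the strategy used in the proof of Theorem~\ref{t5}. Since $\Sigma$ contains an induced paw (for instance $\{x,v_1,v_2,v_4\}$, where $x\sim v_1\sim v_2\sim x$ and $v_4$ is attached to $x$ only), Lemma~\ref{homogeneous} tells us that $S$ is homogeneous in $G$ and that $G[S]$ is $\omega(G)$-colourable; I will also assume $G[S]$ is connected, the disconnected case being handled componentwise. Using the partition of $V(G)\backslash S$ from Lemma~\ref{Y5} together with Lemma~\ref{Y5S}, the structural facts I will rely on are: $N_{V(G)\backslash S}(S)$ is contained in $Y_4\cup(\cup_{i\neq4}Y_{i,x})\cup P^2\cup P^5\cup P^{3,x}\cup P^{4,x}\cup T$ (Lemma~\ref{Y5S}(i)); and $S$ meets at most one of the five blocks $Y_4,Y_{1,x},Y_{2,x},Y_{3,x},Y_{5,x}$, through vertices complete to $S$ (Lemma~\ref{Y5S}(ii)). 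The goal is to exhibit a partition $V(G)\backslash S=A\cup B\cup D\cup M$ in which $G[A]$ is paw-free, the sets $B,D,M$ are stable, $S$ is anticomplete to $B\cup D$, and $N_{V(G)\backslash S}(S)\cap A$ meets only two colour classes of $G[A]$; granting this, the colouring step below produces a proper colouring with $\omega(G)+3$ colours.

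To produce such an $A$ I will use the same device as in Theorem~\ref{t5}: if every vertex of $A$ is adjacent to some fixed vertex $z\in V(\Sigma)\backslash A$, then $G[A]$ is paw-free (otherwise an induced paw in $A$ together with $z$ gives an induced HVN) and $\omega(G[A])\le\omega(G)-1$, so $G[A]$ is $\omega(G)$-colourable by Corollary~\ref{colorpawfree}. Inspecting the block definitions, the spoke $v_2$ is adjacent to every block of $N_{V(G)\backslash S}(S)$ except $Y_{1,x}$, $Y_{3,x}$ and $P^{4,x}$, while, under the automorphism of $\Sigma$ swapping $v_1\leftrightarrow v_2$ and $v_3\leftrightarrow v_5$, the spoke $v_1$ is adjacent to every block of $N_{V(G)\backslash S}(S)$ except $Y_{2,x}$, $Y_{5,x}$ and $P^{3,x}$. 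Since $S$ meets at most one $Y$-block, I split into two cases: if $S$ is anticomplete to $Y_{1,x}\cup Y_{3,x}$, I take $z=v_2$ and let $A$ consist of the blocks adjacent to $v_2$, putting $P^{4,x}$ into $M$; if $S$ meets $Y_{1,x}$ or $Y_{3,x}$ (and hence no other $Y$-block), I take $z=v_1$ and put $P^{3,x}$ into $M$. In either case the only block of $N_{V(G)\backslash S}(S)$ omitted from $A$ is one of $P^{3,x},P^{4,x}$, which is stable by Lemma~\ref{5cyclest}(a) and which, lying in $M$, will receive a colour not used on $S$; so $N_{V(G)\backslash S}(S)\cap A$ is the $P$- and $T$-part together with at most one $Y$-block. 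Exactly as in Theorem~\ref{t5}, this part is a stable set plus at most one extra block; splitting that block into the vertices with and without a neighbour in the stable set, Corollary~\ref{colorpawfree} lets us recolour $G[A]$ with its $\omega(G)$ colours $c_1,\dots,c_\omega$ so that $N_{V(G)\backslash S}(S)\cap A$ uses only $c_1$ and $c_2$.

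Given such a colouring of $G[A]$, I would finish exactly as in Theorem~\ref{t5}: colour $G[S]$ with $c_3,\dots,c_\omega,c_{\omega+1},c_{\omega+2}$ (legal since $G[S]$ is $\omega(G)$-colourable and, along edges into $A$, $S$ only borders the colours $c_1,c_2$), colour the stable set $B$ with $c_{\omega+1}$ and the stable set $D$ with $c_{\omega+2}$ (legal since $S$ is anticomplete to $B\cup D$), and colour the stable set $M$ with a new colour $c_{\omega+3}$. This is a proper $(\omega(G)+3)$-colouring of $G$.

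The hard part will be the structural bookkeeping behind the second paragraph: proving that in each case the blocks \emph{not} adjacent to the chosen spoke genuinely coalesce into just three stable sets $B,D,M$, of which $B,D$ are anticomplete to $S$. Anticompleteness to $S$ of all these blocks apart from the $Y$-blocks is immediate from Lemma~\ref{Y5S}(i), but their stability is \emph{not} covered by the generic Lemma~\ref{Y5stable}(1): the blocks $R_{1,x},R_{2,x},R_{4,x},R_{5,x}$ and $X$ incident with the spokes need separate treatment, and (as in the $T$-5-wheel argument, where $R_3$ and $R_4$ could not both fail to be stable) I expect this to force a finer case analysis, supported by new emptiness lemmas in the spirit of Lemma~\ref{Y5stable}(2a) showing that several of these blocks vanish once a given $Y$- or $P$-block is nonempty, so as to keep the number of stable parts down to three. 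A secondary point to verify is that the recolouring of $G[A]$ furnished by Corollary~\ref{colorpawfree} can indeed be arranged to confine all of $N_{V(G)\backslash S}(S)\cap A$ to the two colours $c_1,c_2$ simultaneously.
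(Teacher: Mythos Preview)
Your proposal has a genuine gap, and it stems from following the $T$-5-wheel template too closely. The paper does \emph{not} anchor the large paw-free block at a rim vertex $v_i$; it anchors it at the hub $x$. Concretely, the paper takes $A=N_G(x)$, which is paw-free for the usual reason and therefore $\omega(G)$-colourable. This single choice absorbs \emph{all} the awkward $x$-adjacent classes --- $X$, every $R_{i,x}$, every $Y_{i,x}$, $P^{3,x}$, $P^{4,x}$ --- in one stroke. What remains outside $N_G(x)\cup S$ is only $R_1,R_2,R_3,R_5,Y_1,Y_2,Y_4,P^1,\dots,P^5,T$, each of which is already known to be stable; the paper groups these into three stable sets $A,B,D$ (with $D=Y_4\cup P^1\cup P^2\cup P^5\cup T$ and the $R_i$'s distributed by a short case split), then colours $S$ exactly as you describe.

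The obstruction to your route is the block $X=\{u:N_\Sigma(u)=\{x\}\}$. No vertex of $X$ is adjacent to $v_1$ or $v_2$, so $X$ lands outside your $A$ in both of your cases and must be covered by your three stable sets $B,D,M$. But $X$ need not be coverable by boundedly many stable sets: take $X$ to be a clique $K_t$ for any $t$, make every vertex of $X$ adjacent to $x$ and to nothing else in $\Sigma$. One checks directly that this produces no induced $P_5$, no HVN (any $K_4$ meeting $X$ lies in $X\cup\{x\}$, and each $v_i$ has at most one neighbour there), and no $T$-5-wheel (the only induced $C_5$ is $C$ itself). So $\chi(G[X])$ can be as large as $\omega(G)-1$, and no ``emptiness lemmas'' in the style of Lemma~\ref{Y5stable}(2a) can rescue this. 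The fix is precisely the paper's: move the anchor from the rim to the hub $x$, so that $X$ (and all the $R_{i,x}$, $Y_{i,x}$, $P^{j,x}$) sit inside the paw-free block rather than outside it.
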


\begin{proof}
First notice that since $\Sigma$ contains an induced paw, and $S$ is not adjacent to  $\Sigma$, it follows from Lemma~\ref{homogeneous} that $S$ is $\omega(G)$-colorable. Without loss of generality, we may assume that $G[S]$ is connected.

Since all vertices in $N_{G}(x)$ are adjacent to  $x$, we have that $G[N_{G}(x)]$ is paw-free. By Theorem~\ref{colorpawfree} this implies that $N_{G}(x)$ is $\omega(G)$-colorable. Let $c_1, \ldots, c_\omega$ be the colors used on $N_{G}(x)$. To prove this theorem, we first claim that
\begin{equation}\label{eq1}
\textit{If $|S|\geq 2$, then the vertices of $N_{G}(x)$ which are adjacent to  $S$ receive at most two colors. }
\end{equation}
By Lemma~\ref{Y5S}({\it i}), $N_{G}(x)\cap N_{V(G)\backslash S}(S)\subseteq (\cup_{i\neq 4} Y_{i,x})\bigcup P^{3,x}\bigcup P^{4,x}$. It follows from Lemma~\ref{5cyclest} and Remark~\ref{rem2} that $P^{3,x}\cup P^{4,x}$ is stable, and by Lemma~\ref{Y5S}({\it ii})  $S$ is adjacent to  at most one of $Y_{i,x}$ with $i=1,2,3,5$.

Suppose that $S$ is adjacent to  some $Y_{i,x}$. If $i=1, 2$, then $Y_{i,x}\cup P^{3,x}\cup P^{4,x}$ is stable by Lemma~\ref{5cyclest}(c) and Remark~\ref{rem2}. Since $G[N_{G}(x)]$ is paw-free, by Corollary~\ref{colorpawfree}, $Y_{i,x}\cup P^{3,x}\cup P^{4,x}$ receives one color. Suppose that $i=3,5$. Let $Y_{i,x}^1$ be the set of vertices adjacent to  $P^{3, x}\cup P^{4,x}$, and $Y_{i,x}^2=Y_{i,x}\backslash Y_{i,x}^{1}$. Then $Y_{i,x}^2\cup P^{3,x}\cup P^{4,x}$ and $Y_{i,x}^1$ are both stable. Thus by Corollary~\ref{colorpawfree}, $Y_{i,x}\cup P^{3,x}\cup P^{4,x}$ receives two colors. Now suppose that $S$ is not adjacent to  $Y_{i, x}$ for any $i=1,2,3,5$. Then we can color $P^{3,x}\cup P^{4,x}$ by one color. This completes the proof of (\ref{eq1}). Let $c_1, c_2$ be the two colors used on the vertices of $N_{G}(x)$ which are adjacent to  $S$.

Now it is left to color $V(G)\backslash N_{G}(x)$. Let $D=Y_4\cup P^1\cup P^2\cup P^5\cup T$. By Lemma~\ref{5cyclest}(a,b,c), we have that $D$ is stable.

{\it \bf Case 1: $R_1\neq \emptyset$.} It follows from Lemma~\ref{Y5stable}(2a) that $R_2=\emptyset$. If $R_3\neq\emptyset$, then by Lemma~\ref{Y5stable}(2b), $R_1$ is anticomplete to $R_3$. Let $A=R_1\cup R_3\cup Y_2\cup P^3$ and $B=R_5\cup Y_1\cup P^4$. It follows from Lemma~\ref{5cyclest}(c) and Lemma~\ref{Y5stable}(1,3,4) that $A$  and  $B$ are stable.

Let $c_{\omega+1}, c_{\omega+2}$ and $c_{\omega+3}$ be three new colors used on $A, B, D$ respectively ($A\cup B$ might be empty). Recall that by Lemma~\ref{Y5S}, $S$ is not adjacent to  $A\cup B$. If $|S|\geq 2$, then by (\ref{eq1}), we may use $c_3, \ldots, c_\omega, c_{\omega+1}, c_{\omega+2}$ to color $S$. If $|S|=1$, then we just use $c_{\omega+1}$ or $c_{\omega+2}$ to color $S$. Since $V(G)=N_{G}(x)\cup A\cup B\cup D\cup S$, we obtain an $(\omega(G)+3)$-coloring of $G$.

{\it \bf Case 2: $R_2\neq \emptyset$.} By Lemma~\ref{Y5S}(2a) we have that $R_1=\emptyset$. Let $A=P^3\cup R_3\cup Y_2$ and $B=R_5\cup Y_1\cup R_2\cup P^4$. It follows from Lemma~\ref{5cyclest}(c) and Lemma~\ref{Y5stable}(1,3,4) that $A$ is stable. Similarly by Lemma~\ref{5cyclest}(c) and Lemma~\ref{Y5stable}(1,2c,3,4), we have that $B$ is stable.

Let $c_{\omega+1}, c_{\omega+2}$ and $c_{\omega+3}$ be three new colors used on $A, B, D$ respectively ($A\cup B$ might be empty). It follows from Lemma~\ref{Y5S} that $S$ is not adjacent to  $A\cup B$. When $|S|\geq 2$, by (\ref{eq1}), we may use $c_3, \ldots, c_\omega, c_{\omega+1}, c_{\omega+2}$ on $S$. If $|S|=1$, then we can use $c_{\omega+1}$ or $c_{\omega+2}$ to color $S$. Since $V(G)=N_{G}(x)\cup A\cup B\cup D\cup S$, we obtain an $(\omega(G)+3)$-coloring of $G$.

{\it\bf Case 3: $R_1\cup R_2=\emptyset$.} Now let $A=R_3\cup Y_2\cup P^3$ and $B=R_5\cup Y_1\cup P^4$. It follows from Lemma~\ref{5cyclest}(c)  and Lemma~\ref{Y5stable}(1,3,4) that $A, B$ are both stable.

Let $c_{\omega+1}, c_{\omega+2}$ and $c_{\omega+3}$ be three new colors used on $A, B, D$ respectively ($A\cup B$ might be empty). It follows from Lemma~\ref{Y5S} that $S$ is not adjacent to   $A\cup B$. If $|S|\geq 2$, we may use $c_3, \ldots, c_\omega, c_{\omega+1}, c_{\omega+2}$ to color $S$. If $|S|=1$, then we color $S$ by $c_{\omega+1}$ or $c_{\omega+2}$. Hence $G$ is $(\omega(G)+3)$-colorable as $V(G)=N_{G}(x)\cup A\cup B\cup D\cup S$. This completes the proof.
\end{proof}

\section{Proof of Theorem~\ref{main}}

Let $G$ be a $(P_5, \textit{HVN})$-free graph with no induced $T$-5-wheel or induced $Y$-5-wheel. To prove Theorem~\ref{main}, by Theorem~\ref{t5} and Theorem~\ref{y5}, it only lefts to show that  $G$ is $(\omega(G)+3)$-colorable.

\begin{guess}\label{songxu}\cite[Theorem 1.2]{arxivsong}
Let $G$ be an (odd hole, full house)-free graph. Then, $\chi(G)\leq \omega(G)+1$, and the equality holds if and only if $\omega(G)=3$ and $G$ induces an odd antihole on seven vertices.
\end{guess}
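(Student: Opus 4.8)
The plan is to route everything through the Strong Perfect Graph Theorem and a local study of the single odd antihole that can survive the hypotheses. Since $G$ has no odd hole, the Strong Perfect Graph Theorem says the only obstructions to perfection are induced odd antiholes $\overline{C_{2k+1}}$ with $k\ge 2$. The smallest, $\overline{C_5}\cong C_5$, is itself an odd hole and so is already excluded. The decisive observation is that, among the odd antiholes that are not themselves odd holes, only $\overline{C_7}$ is \textit{full house}-free: a full house contains a $K_4$, and $\overline{C_{2k+1}}$ contains a $K_4$ exactly when $\alpha(C_{2k+1})=k\ge 4$, while in that range the clique $\{0,2,4,6\}$ together with the apex $1$ (which is adjacent to precisely $4$ and $6$) induces a full house. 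Hence $\overline{C_7}$ is the unique induced odd antihole that an $(\text{odd hole},\textit{full house})$-free graph may contain. If $G$ has no induced $\overline{C_7}$ it is therefore perfect, giving $\chi(G)=\omega(G)$ with no possibility of equality; so from here I fix an induced $W\cong\overline{C_7}$.

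The heart of the matter is to control how the rest of $G$ attaches to $W$. The key lemma I would establish is that no vertex outside $W$ is adjacent to all three vertices of a triangle of $W$, the triangles of $\overline{C_7}$ being exactly the seven sets $\{j,j+2,j+4\}$. Indeed, if $u$ were complete to such a triangle $\Delta$, then $\Delta\cup\{u\}$ is a $K_4$, and a short case check on the four remaining vertices of $W$ shows that avoiding an induced full house forces $N_W(u)$ to equal a specific five-element set; that set then contains a \emph{second} triangle to which $u$ is again complete, and one of its vertices serves as an apex completing a full house, a contradiction. In particular no vertex is complete to $W$. Consequently, for every outside vertex $u$ the set $N_W(u)$ induces a triangle-free subgraph of $\overline{C_7}$, which (as $\overline{C_7}$ has independence number $2$) restricts $N_W(u)$ to a short, explicitly listable family of patterns; discarding those that would create an odd hole with $W$ leaves only a handful.

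With this classification I would partition $V(G)=V(W)\cup A\cup B$, where $B$ is anticomplete to $W$ and $A$ collects the admissible attachments, and then build an $(\omega(G)+1)$-colouring directly: four colours give a proper colouring of $W$, the triangle-free attachments in $A$ reuse these four together with the base palette, and $G[B]$, being again $(\text{odd hole},\textit{full house})$-free and $\overline{C_7}$-poorer, is coloured by induction on $|V(G)|$. The equality statement then follows from the same bookkeeping. The easy direction is immediate: if $\omega(G)=3$ and $G\supseteq\overline{C_7}$ then $\chi(G)\ge\chi(\overline{C_7})=4=\omega(G)+1$, forcing equality. For the converse one shows that the fourth colour on $W$ is genuinely unavoidable only when $\omega(G)=3$; once $\omega(G)\ge 4$ there is enough clique room around $W$ to absorb the defect, so the colouring closes with only $\omega(G)$ colours and equality cannot occur.

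The step I expect to be the real obstacle is exactly this attachment classification together with the colour counting built on it. Unlike complete attachments, partial ones genuinely occur — a vertex adjacent to a single vertex of $W$ violates neither hypothesis — so $V(W)$ is not in general a homogeneous set and cannot simply be detached and disposed of by a component or module reduction in the spirit of Lemma~\ref{homogeneous}. Pinning down the exact admissible neighbourhoods in $\overline{C_7}$, and then proving that each is compatible with a colouring that spends at most one colour beyond $\omega(G)$, and none beyond $\omega(G)$ once $\omega(G)\ge 4$, is the combinatorial core of the argument and is the analogue in this setting of the wheel lemmas of Sections~3 and~4.
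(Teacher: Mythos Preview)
The paper does not prove this statement: it is quoted from \cite[Theorem~1.2]{arxivsong} and invoked in Section~5 solely to dispose of the case where the $(P_5,\textit{HVN})$-free graph under consideration has no induced $C_5$ (and hence, being $P_5$-free, no odd hole). There is therefore no proof in the present paper to compare your proposal against.

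On its own terms, your reduction via the Strong Perfect Graph Theorem is correct and is the natural starting point: the verification that $\overline{C_{2k+1}}$ for $k\ge 4$ contains an induced full house (the clique $\{0,2,4,6\}$ together with the apex $1$, adjacent to exactly $4$ and $6$) is clean, leaving $\overline{C_7}$ as the only possible obstruction to perfection. What follows, however, is a plan rather than a proof. You correctly identify the attachment classification around a fixed $\overline{C_7}$ as the combinatorial core and explicitly flag it as the obstacle you have not resolved; the colouring built on top of it, and in particular the claim that for $\omega(G)\ge 4$ ``there is enough clique room around $W$ to absorb the defect'' so that $\chi(G)=\omega(G)$, is exactly the non-trivial content of the equality characterisation and cannot be dismissed as bookkeeping.
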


By Theorem~\ref{songxu}, we may always assume that $G$ contains induced 5-cycles.

\begin{llemma}\label{5cycle1}
Let $C=v_{1}v_{2}v_{3}v_{4}v_{5}v_1$ be an induced 5-cycle of $G$, and
\begin{itemize}
\item[](1). $S=\{u | N_{C}(u)=\emptyset\}$,
\item[](2). $R_{i}=\{u | N_{C}(u)=\{v_{i-1}, v_{i+1}\}\}$ for $i\in \{1, \ldots, 5\}$,
\item[](3). $P^{i}=\{u | N_{C}(u)=\{v_{i-1}, v_{i}, v_{i+1}, v_{i+2}\}\}$ for $i\in \{1, \ldots, 5\}$,
\item[](4). $T=\{u | N_{C}(u)=V(C)\}$.
\end{itemize}
Then $V(G)=T\bigcup S\bigcup\cup_{1\leq i\leq 5}(R_{i}\cup P^i)$. Moreover, $T\bigcup (\cup_{1\leq i\leq 5} P^i)$ and $R_i$ ($i\in \{1, \ldots, 5\}$) are stable.
\end{llemma}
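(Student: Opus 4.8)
The plan is to build on Lemma~\ref{5cycle}, which already enumerates every possible attachment type of a vertex to the chordless $5$-cycle $C$, and then to eliminate the two types that cannot coexist with the absence of induced $T$-5-wheels and $Y$-5-wheels. Lemma~\ref{5cycle} gives $V(G)=T\cup S\cup\bigcup_{1\le i\le 5}(R_i\cup\bar R_i\cup Y_i\cup P^i)$, so it is enough to prove that $\bar R_i=\emptyset$ and $Y_i=\emptyset$ for every $i$. If $u\in\bar R_i$ then $N_C(u)=\{v_{i-1},v_i,v_{i+1}\}$, i.e.\ $u$ is adjacent to exactly three consecutive vertices of the chordless $5$-cycle $C$, so $G[V(C)\cup\{u\}]$ is an induced $T$-5-wheel, a contradiction. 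If $u\in Y_i$ then $N_C(u)=\{v_{i-2},v_i,v_{i+2}\}$; since $v_{i-2}v_{i+2}\in E(C)$ but $v_i$ is nonadjacent to both $v_{i-2}$ and $v_{i+2}$, these are three non-consecutive vertices in the sense of the definition of a $Y$-5-wheel, so $G[V(C)\cup\{u\}]$ is an induced $Y$-5-wheel, again a contradiction. Hence $V(G)=T\cup S\cup\bigcup_{1\le i\le 5}(R_i\cup P^i)$.

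It remains to check the stability assertions. That $T\cup(\bigcup_{1\le i\le 5}P^i)$ is stable is exactly Lemma~\ref{5cyclest}(a). For $R_i$ I would argue by contradiction: assume $u,v\in R_i$ with $uv\in E(G)$, and exhibit a chordless $5$-cycle on which $v$ sees a consecutive triple. Using only that $C$ is a chordless $5$-cycle (so in particular $v_{i-2}v_{i+2}\in E(C)$) and that $u\in R_i$, one checks that $C'=v_{i-1}v_{i-2}v_{i+2}v_{i+1}uv_{i-1}$ is a chordless $5$-cycle: its five edges are present, and the five non-edges $v_{i-1}v_{i+1}$, $v_{i-1}v_{i+2}$, $v_{i-2}v_{i+1}$, $v_{i-2}u$, $v_{i+2}u$ are all nonadjacent pairs (the first three by chordlessness of $C$, the last two because $N_C(u)=\{v_{i-1},v_{i+1}\}$). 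Now $v\in R_i$ is adjacent to $v_{i+1}$ and $v_{i-1}$, and to $u$ by assumption, while $v$ is nonadjacent to $v_{i-2}$ and $v_{i+2}$; since $v_{i+1},u,v_{i-1}$ are consecutive on $C'$, the graph $G[V(C')\cup\{v\}]$ is an induced $T$-5-wheel, contradicting the hypothesis. Therefore each $R_i$ is stable.

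Once Lemmas~\ref{5cycle} and~\ref{5cyclest} are available the argument is largely bookkeeping, and I expect the only step needing real care to be the stability of $R_i$: there one must produce the auxiliary $5$-cycle $C'$ and verify, purely from the chordlessness of $C$ and the membership $u,v\in R_i$, both that $C'$ is chordless and that $v$ attaches to exactly a consecutive triple of it. The main risk is an index slip in the cyclic relabelling of $C'$ (all indices taken modulo $5$); I do not anticipate any genuine obstruction beyond that.
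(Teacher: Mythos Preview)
Your proposal is correct and takes essentially the same approach as the paper: invoke Lemma~\ref{5cycle} for the full decomposition, use the $T$-5-wheel and $Y$-5-wheel hypotheses to kill $\bar R_i$ and $Y_i$, and quote Lemma~\ref{5cyclest}(a) for the stability of $T\cup\bigcup_i P^i$. Your explicit $T$-5-wheel argument for the stability of $R_i$ is exactly the one the paper gives (in the proof of Lemma~\ref{Y5stable}(1), on the same six vertices $\{u,v,v_{i-1},v_{i-2},v_{i+2},v_{i+1}\}$); the paper's proof of the present lemma simply defers to these ingredients in one sentence.
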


\begin{proof}
The lemma directly follows from Lemma~\ref{5cycle}, Lemma~\ref{5cyclest} and the fact that $G$ contains no induced $T$-5-wheel or $Y$-5-wheel.
\end{proof}

\begin{llemma}\label{r}
Suppose that $R_{i-1}\neq\emptyset$ and $R_{i+1}\neq\emptyset$. Then $R_{i-1}$ is anticomplete to $R_{i+1}$.
\end{llemma}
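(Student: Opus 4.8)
The plan is to argue by contradiction, supposing there exist $u\in R_{i-1}$ and $v\in R_{i+1}$ with $uv\in E(G)$, and then exhibit a forbidden configuration — either an induced $P_5$, an $HVN$, a $T$-$5$-wheel, or a $Y$-$5$-wheel — which is impossible since $G$ is $(P_5,\textit{HVN},T\text{-}5\text{-wheel},Y\text{-}5\text{-wheel})$-free. By the symmetry of the $5$-cycle $C=v_1v_2v_3v_4v_5v_1$ under rotation, it suffices to treat one representative value of $i$, say $i=1$, so that $u\in R_5$ and $v\in R_2$; thus $N_C(u)=\{v_4,v_1\}$ and $N_C(v)=\{v_1,v_3\}$.

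The first step is to record the adjacencies among $\{u,v,v_1,\dots,v_5\}$ that are forced. Both $u$ and $v$ are adjacent to $v_1$, and $uv\in E(G)$ by assumption. Now I would examine the set $\{u,v,v_1\}$ together with the cycle: $u$ sees $v_4,v_1$ on $C$ and $v$ sees $v_1,v_3$ on $C$. The key is to look at whether $\{v_2,v_3,v_4,u,v\}$ or a similar five-vertex set already forms one of the excluded graphs. For instance, consider the $5$-cycle on $v_1,v_2,v_3,v,u$: the edges $v_1v_2$, $v_2v_3$, $v_3v$ (since $v\in R_2$ means $v\sim v_3$), $vu$, $uv_1$ are all present, and one checks the non-edges $v_1v_3$ (true, $C$ chordless), $v_1v$ (false — $v_1\sim v$!). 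So that particular cycle is not induced. Instead I would focus on the path/wheel structure: the vertices $u,v_4,v_5$ — note $u\sim v_4$, $u\sim v_1$, and $v_5\sim v_4$, $v_5\sim v_1$ — so $\{u,v_4,v_5,v_1\}$ together with $v$ is promising. Indeed, since $v\sim v_1$, $v\sim u$, $v\not\sim v_4$ (as $N_C(v)=\{v_1,v_3\}$) and $v\not\sim v_5$, the set $\{v,u,v_4,v_5,v_1\}$: here $v u$, $v v_1$, $u v_1$, $u v_4$, $v_4 v_5$, $v_5 v_1$ are edges, and $v\not\sim v_4$, $v\not\sim v_5$. This is a $K_4$ on $\{u,v_1,v_4,v_5\}$? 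No — $u\not\sim v_5$ in general. So I must be more careful and split on the single undetermined adjacency $u v_5$ … wait, $u\in R_5$ forces $u\sim v_4$ and $u\sim v_1$ and $u\not\sim v_2,v_3,v_5$. Hence all adjacencies among the seven vertices are determined except none: the configuration $\{u,v,v_1,\dots,v_5\}$ is completely pinned down.

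The main step, then, is to identify which forbidden subgraph appears in this fully-determined seven-vertex graph. The natural candidate is a $T$-$5$-wheel: take the $5$-cycle $u v_1 v_3 \,?\,$ — rather, consider the cycle $C'=u\,v_4\,v_3\,v_2\,v_1\,u$? Its edges are $uv_4$, $v_4v_3$, $v_3v_2$, $v_2v_1$, $v_1u$ — all present — and it is chordless iff $u\not\sim v_3$ (true), $u\not\sim v_2$ (true), $v_4\not\sim v_2$ (true), $v_4\not\sim v_1$ (true), $v_3\not\sim v_1$ (true). So $C'$ is an induced $5$-cycle! Now $v$ is adjacent to $v_1,v_3$ on $C'$ and to $u$ on $C'$: that is three vertices of $C'$. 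On $C'=u\,v_4\,v_3\,v_2\,v_1$, the neighbours $v_1,u$ are consecutive, and $v_3$ sits two steps from $v_1$ and two steps from $u$ — so $v$ is adjacent to $v_1,u$ (consecutive) and $v_3$, giving three vertices that are \emph{not} all consecutive: $v_1,u,v_3$ in cyclic order $u,v_4,v_3,v_2,v_1$ are positions $1,3,5$ — wait $u=1$, $v_1=5$, $v_3=3$, which are pairwise non-consecutive on a $5$-cycle. Hence $v$ together with $C'$ forms a $Y$-$5$-wheel, contradicting $Y$-$5$-wheel-freeness. I expect the bookkeeping of which three positions on the relabelled cycle $v$ attaches to — consecutive versus non-consecutive — to be the only delicate point; depending on the parity one lands in the $T$-$5$-wheel case instead of the $Y$-$5$-wheel case, but either way the configuration is forbidden, so the contradiction stands and $R_{i-1}$ is anticomplete to $R_{i+1}$.
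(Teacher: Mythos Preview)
Your argument is correct and is essentially the same as the paper's: both exhibit an induced $5$-cycle among $\{u,v,v_1,\dots,v_5\}$ and observe that the remaining vertex is adjacent to exactly three of its vertices in the $Y$-pattern, yielding a forbidden $Y$-$5$-wheel. The paper uses the cycle $w\,v_i\,v_{i-1}\,v_{i-2}\,v_{i+2}\,w$ with hub $u$; you use (for $i=1$) the symmetric choice $C'=u\,v_4\,v_3\,v_2\,v_1\,u$ with hub $v$, which is the same picture after swapping the roles of $u$ and $v$.

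One small slip to fix: you write that positions $1,3,5$ on $C'$ are ``pairwise non-consecutive on a $5$-cycle,'' but positions $1$ and $5$ \emph{are} consecutive (indeed $C_5$ has independence number $2$, so three pairwise non-adjacent vertices cannot exist). You had it right one line earlier when you said ``$v_1,u$ are consecutive'' --- that is exactly the $Y$-$5$-wheel pattern (two consecutive neighbours plus one across), so the conclusion stands; just delete the ``pairwise non-consecutive'' clause.
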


\begin{proof}
Let $u\in R_{i-1}$ and $w\in R_{i+1}$. Suppose that $uw\in E(G)$. Then $\{u,v_{i-1}, v_{i+2}, v_{i-2}, v_{i},w\}$ induces a $Y$-5-wheel, which leads to a contradiction. Thus $R_{i-1}$ is anticomplete to $R_{i+1}$.
\end{proof}

Suppose that $\cup_{1\leq i\leq 5}R_i\neq \emptyset$. We may always assume that $R_1\neq\emptyset$ by relabelling the vertices on $C$. Then it follows from Lemma~\ref{r} that $R_2\cup R_5$ is stable. Here notice that if $\cup_{1\leq i\leq 5}R_i= \emptyset$, then this holds trivially. Now Let $A=R_1\bigcup R_3\bigcup(\cup_{i\neq 4}P^i)\bigcup T, B_1=R_2\cup R_5, B_2=R_4$ and $D=P^4$. Clearly $A\cup B_1\cup B_2\cup D=V(G)\backslash S$.

Since each vertex of $A$ is adjacent to   $v_2$, we have that $A$ is $\omega(G)$-colorable, and let the colors be $c_1, \ldots, c_\omega$. Let $R_1^1=\{u\in R_1 | N_{(\cup_{i\neq 4}P^i)\bigcup T}(u)\neq \emptyset\}$ and $R_1^0=R_1\backslash R_1^1$. Then by Lemma~\ref{colorpawfree}, we may use $c_1$ on $(\cup_{i\neq 4}P^i)\bigcup T\cup R_1^0$ and $c_2$ on $R_1^1$.

Let $c_{\omega+1}, c_{\omega+2}$ and $c_{\omega+3}$ be three new colors used on $B_1,B_2,D$ respectively ($B_1\cup B_2$ might be empty). It follows from Lemma~\ref{5cyclest}(g) that $S$ is not adjacent to   $B_1\cup B_2$. If $|S|=1$, then we may use $c_{\omega+1}$ or $c_{\omega+2}$ on $S$, which gives an$(\omega(G)+3)$-coloring for $G$. Thus in the rest of the proof, we may assume that $|S|\geq 2$.

Let $W=T\bigcup (\cup_{1\leq i\leq 5} P^i)$. By Lemma~\ref{5cyclest}(g), $N_{V(G)\backslash S}(S)\subseteq W$. Since $G$ is connected, we have that $W\neq\emptyset$. Let $S_1=\{s\in S | N_{W}(s)\neq\emptyset\}$ and $S_0=S\backslash S_1$. Let $w\in W$ be a vertex with the largest number of neighbours in $S_1$. We claim that
\begin{equation}\label{eq5}
\textit{$w$ is complete to $S_1$}.
\end{equation}
Suppose that $w$ is not complete to $S_1$, and let $z\in S_1$ such that $wz\notin E(G)$. Since $w$ has the maximum number of neighbours in $S_1$, we have that there must exist a vertex $y\in N_{S_1}(w)$ and a vertex $w'\in W$ such that $w'z\in E(G)$ and $w'y\notin E(G)$. This implies that either $ywv_iw'z=P_5$ or $\{z, y, w,v_i,w', v_{i-1}\}$ induces a $T$-5-wheel for some $1\leq i\leq 5$, a contradiction. This completes the proof of (\ref{eq5}). Thus
\begin{equation}\label{eq3}
\textit{$G[S_1]$ is paw-free, and $S_1$ is $\omega(G)$-colorable}.
\end{equation}

It follows from Lemma~\ref{5cyclest}(g) that $S_1$ is not adjacent to   $B_1\cup B_2$. If $|S_1|=1$, then we can use $c_{\omega+1}$ or $c_{\omega+2}$ on $S_1$. If $|S_1|\geq 2$, we may use $c_3, \ldots, c_\omega, c_{\omega+1}, c_{\omega+2}$ to color $S_1$.  Hence $V(G)\backslash S_0$ is $(\omega(G)+3)$-colorable. Now it is left to extend this $(\omega(G)+3)$-coloring to $S_0$ for completing the proof of Theorem~\ref{main}.

Recall that $S_0$ is the set of vertices of $S$ which are anticomplete to  $W$. If $|S_0|=1$, then we can just color $S_0$ by $c_{\omega+3}$. Now we may assume that $|S_0|\geq 2$. Since $W\neq\emptyset$, we have that $G[V(C)\cup W]$ contains an induced paw. By Lemma~\ref{homogeneous}, $S_0$ is homogeneous and $\omega(G)$-colorable.

By (\ref{eq3}), $G[S_1]$ is either complete multipartite, or triangle-free. First suppose that $G[S_1]$ is complete multipartite, and let the partitions be $S_{1,1}, \ldots, S_{1,t}$. Observe that we can always color $S_1$ such that each $S_{1,i}$ receives exactly one color. We claim that
\begin{equation}\label{eq4}
\textit{$S_0$ is adjacent to   at most one partite set of $S_1$}.
\end{equation}
Suppose to the contrary that $S_0$ is adjacent to   $S_{1,1}$ and $S_{1,2}$, that is, there exist $u_1\in S_{1,1}, u_2\in S_{1, 2}$ such that $u_1, u_2$ are both complete to $S_0$. Let $a,b\in S_0$ where $ab\in E(G)$. It follows from (\ref{eq5}) that there exists $w\in W$ which is complete to $S_1$. Thus $\{w, u_1, u_2,a,b\}$ induces an HVN as $wa, wb\notin E(G)$. Hence this proves (\ref{eq4}).

Let $S_{1,1}$ be the partite set adjacent to   $S_0$, and suppose it is colored by $c_3$. Then we can color $S_0$ by $c_4, \ldots, c_{\omega+2}, c_{\omega+3}$. Hence we obtain an $(\omega(G)+3)$-coloring for $G$.

Finally, suppose that $G[S_1]$ is triangle-free. Then $G[S_1]$ is $3$-colorable, and we may assume that it is colored by $c_3, c_4, c_5$. Since $S_0$ is anticomplete to $V(G)\backslash S$, we can color $S_0$ by $c_1, c_2, c_6, \ldots, c_{\omega+2}, c_{\omega+3}$. Therefore, $G$ is $(\omega(G)+3)$-colorable, which completes the proof of Theorem~\ref{main}.

\bibliographystyle{acm}
\bibliography{history}

\end{document}